\tikzset{->-/.style={decoration={
  markings,
  mark=at position .5 with {\arrow{>}}},postaction={decorate}}}
\newtheorem{theorem}{Theorem}[section]
\newtheorem{corollary}[theorem]{Corollary}
\newtheorem{proposition}[theorem]{Proposition}
\newtheorem*{introthm}{Theorem}
\def\R{\mathbb{R}}
\def\Z{\mathbb{Z}}
\begin{document}

\title{Annular Non-Crossing Matchings}
\author{Paul Drube \qquad Puttipong Pongtanapaisan\\
\small Department of Mathematics and Statistics\\
\small Valparaiso University\\
\small \tt \{paul.drube, puttipong.pongtanapaisan\}@valpo.edu 
}

\maketitle

\begin{abstract}
It is well known that the number of distinct non-crossing matchings of $n$ half-circles in the half-plane with endpoints on the x-axis equals the $n^{th}$ Catalan number $C_n$.  This paper generalizes that notion of linear non-crossing matchings, as well as the circular non-crossings matchings of Goldbach and Tijdeman, to non-crossings matchings of $n$ line segments embedded within an annulus.  We prove that the number of such matchings $\vert Ann(n,m) \vert$ with $n$ exterior endpoints and $m$ interior endpoints correspond to an entirely new, one-parameter generalization of the Catalan numbers with $C_n = \vert Ann(1,m) \vert$.  We also develop bijections between specific classes of annular non-crossing matchings and other combinatorial objects such as binary combinatorial necklaces and planar graphs.  Finally, we use Burnside's Lemma to obtain an explicit formula for $\vert Ann(n,m) \vert$ for all $n,m \geq 0$.
\end{abstract}

\section{Introduction}
\label{sec: background}

The Catalan numbers are arguably the most studied sequence of positive integers in mathematics.  Among their seemingly countless combinatorial interpretations is an identification of the $n^{th}$ Catalan number $C_n = \frac{1}{n+1} \binom{2n}{n}$ with non-crossing matchings of $2n$ points along the x-axis via $n$ half-circles in the upper half-plane.  In an effort to avoid confusion, we will sometimes refer to such arrangements as linear non-crossing matchings of order $n$.  The $n^{th}$ Catalan number is also known to equal the number of ordered rooted trees with $n$ non-root vertices.  One bijection between these two interpretations is shown in Figure \ref{fig: matchings to trees, half-plane}.  That map involves placing a vertex in each region of the complement of a matching, with the ``external" region receiving the root vertex, and then adding an edge if two regions are separated by a half-circle.  For an extended treatment of the many different interpretations of Catalan numbers and the bijections between them, see \cite{Stanley}.

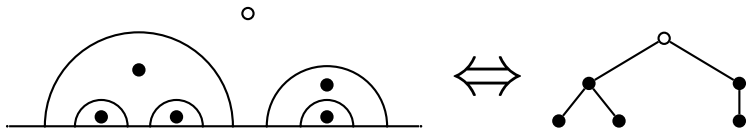
\begin{figure}[h!]
\centering
\begin{tikzpicture}
[scale=.5,auto=left,every node/.style={circle,fill=black,inner sep=0pt}]
	\node[inner sep=0pt] (left) at (0,0) {};
	\node (1) at (1,0) {};
	\node (2) at (1.8,0) {};
	\node (3) at (3.2,0) {};
	\node (4) at (3.8,0) {};
	\node (5) at (5.2,0) {};
	\node (6) at (6,0) {};
	\node (7) at (6.9,0) {};
	\node (8) at (7.8,0) {};
	\node (9) at (9.2,0) {};
	\node (10) at (10.1,0) {};
	\node[inner sep=0pt] (right) at (11,0) {};
	\draw[thick] (left) to (right);
	\draw[thick] (3.2,0) arc (0:180:.7);
	\draw[thick] (5.2,0) arc (0:180:.7);
	\draw[thick] (6,0) arc (0:180:2.5);
	\draw[thick] (10.1,0) arc (0:180:1.6);
	\draw[thick] (9.2,0) arc (0:180:.7);
	\node[fill=white,draw,inner sep=1.5pt,thick] (n1) at (6.4,3) {};
	\node[fill=black,draw,inner sep=1.5pt,thick] (n2) at (3.5,1.5) {};
	\node[fill=black,draw,inner sep=1.5pt,thick] (n3) at (8.5,1.1) {};
	\node[fill=black,draw,inner sep=1.5pt,thick] (n4) at (2.5,0.25) {};
	\node[fill=black,draw,inner sep=1.5pt,thick] (n5) at (4.5,0.25) {};
	\node[fill=black,draw,inner sep=1.5pt,thick] (n6) at (8.5,0.25) {};
\end{tikzpicture}
\hspace{0.05in}
\scalebox{2.5}{\raisebox{5pt}{$\Leftrightarrow$}}
\hspace{0.05in}
\begin{tikzpicture}
[scale=0.4,auto=left,every node/.style={circle,fill=black,inner sep=2.5pt}]
	\node[fill=white,draw,inner sep=1.5pt,thick] (n1) at (6,3) {};
	\node[fill=black,draw,inner sep=1.5pt,thick] (n2) at (3.5,1.5) {};
	\node[fill=black,draw,inner sep=1.5pt,thick] (n3) at (8.5,1.5) {};
	\node[fill=black,draw,inner sep=1.5pt,thick] (n4) at (2.5,0.25) {};
	\node[fill=black,draw,inner sep=1.5pt,thick] (n5) at (4.5,0.25) {};
	\node[fill=black,draw,inner sep=1.5pt,thick] (n6) at (8.5,0.25) {};
	\draw[thick] (n1) to (n2);
	\draw[thick] (n1) to (n3);
	\draw[thick] (n2) to (n4);
\draw[thick] (n2) to (n5);
\draw[thick] (n3) to (n6);
\end{tikzpicture}
\caption{The bijection between linear non-crossing matchings and ordered rooted trees.}
\label{fig: matchings to trees, half-plane}
\end{figure}

Non-crossing matchings admit many interesting generalizations if one restricts curves to a subset of $\R^2$ that is not the upper half-plane.  One such modification is what we refer to as circular non-crossing matchings.  In a circular non-crossing matching of order $n$, $2n$ distinct points on the unit circle are connected by a set of $n$ non-intersecting smooth curves within the unit circle.  Circular non-crossing matchings are considered equivalent if they differ by isotopies within the unit circle (including isotopies that ``slide" endpoints), as long as those isotopies do not involve in curves or endpoints intersecting.  In particular, circular matchings related by rotation about the center of the unit circle are equivalent.  However, matchings that can only be related via reflections are considered distinct.  We henceforth refer to the number of circular non-crossing matchings of order $n$, modulo these relations, by $\widetilde{C}_n$.

One in-depth study of circular non-crossing matchings was undertaken by Goldbach and Tijdeman in \cite{GT}.  In that work, an involved application of Burnside's Lemma showed that:

\begin{equation}
\label{eq: Goldbach-Tijdeman enumeration}
\widetilde{C}_n \ = \ \frac{1}{2n} \left( \sum_{d | n} \phi(n/d) \binom{2d}{d} \right) - \frac{1}{2} C_n + \frac{1}{2} C_{(n-1)/2}
\end{equation}

\noindent where $\phi(k)$ is Euler's totient function and $C_k$ is taken to be zero when $k$ is not an integer, so that the final term only appears when $n$ is odd.

Although not obvious from Equation \ref{eq: Goldbach-Tijdeman enumeration}, there is also a bijection between circular non-crossing matchings of order $n$ and unrooted planar trees with $n+1$ nodes ($n$ edges).  For an illustration of this bijection, see Figure \ref{fig: matchings to trees, circular}.  Notice that this correspondence identifies $\widetilde{C}_n$ with the $(n-1)^{st}$ entry of A002995 \cite{OEIS}.  See \cite{Harary} for further discussion of the graph-theoretic interpretations of $\widetilde{C}_n$.

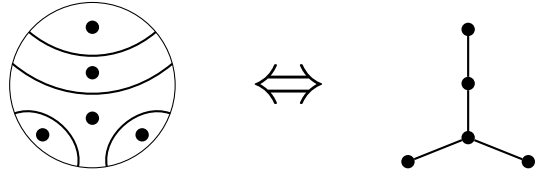
\begin{figure}[h!]
\centering
\begin{tikzpicture}
[scale=1.1,auto=left,every node/.style={circle,inner sep=0pt}]
	\draw (0,0) circle (1cm);
	\node[fill=black,draw,inner sep=1.5pt,thick] (n1) at (90:0.7) {};
	\node[fill=black,draw,inner sep=1.5pt,thick] (n2) at (90:0.15) {};
	\node[fill=black,draw,inner sep=1.5pt,thick] (n3) at (270:0.4) {};
	\node[fill=black,draw,inner sep=1.5pt,thick] (n4) at (225:0.85) {};
	\node[fill=black,draw,inner sep=1.5pt,thick] (n5) at (315:0.85) {};
	\draw[thick, bend right=40] (140:1) to (40:1);
	\draw[thick, bend right=40] (165:1) to (15:1);
	\draw[thick, bend right=60] (340:1) to (280:1);
	\draw[thick, bend right=60] (260:1) to (200:1);
\end{tikzpicture}
\hspace{0.3in}
\scalebox{2.5}{\raisebox{10pt}{$\Leftrightarrow$}}
\hspace{0.3in}
\begin{tikzpicture}
[scale=0.4,auto=left,every node/.style={circle,fill=black,inner sep=2.5pt,thick}]
	\node(n1)[fill=black,draw,inner sep=1.5pt,thick] at (0,-2) {};
	\node(n2)[fill=black,draw,inner sep=1.5pt,thick] at (0,-0.2) {};
	\node(n3)[fill=black,draw,inner sep=1.5pt,thick] at (0,1.6) {};
	\node(n4)[fill=black,draw,inner sep=1.5pt,thick] at (2,-2.8) {};
	\node(n5)[fill=black,draw,inner sep=1.5pt,thick] at (-2,-2.8) {};
	\draw[thick] (n1) to (n2);
	\draw[thick] (n2) to (n3);
	\draw[thick] (n4) to (n1);
	\draw[thick] (n5) to (n1);
\end{tikzpicture}
\caption{The bijection between circular non-crossing matchings and unrooted planar trees.}
\label{fig: matchings to trees, circular}
\end{figure}

In this paper we introduce a new, two-parameter generalization of linear non-crossing matchings in which our smooth curves are embedded within an annulus.  So let $n,m$ be non-negative integers such that $n+m$ is even.  We define an \textbf{annular non-crossing matching of type $(n,m)$} to be a collection of $\frac{n+m}{2}$ non-intersecting smooth curves within the annulus whose endpoints lie at $n+m$ distinct points along the annulus, with $n$ of those endpoints on the exterior boundary of the annulus and $m$ of those endpoints on the interior boundary of the annulus.  Two annular matchings are considered equivalent if they differ by isotopies within the annulus (including isotopies that ``slide" endpoints), as long as those isotopies don't result in curves or endpoints intersecting.  Annular matchings that can only be related by reflections are considered distinct; also disallowed are isotopies where an edge must pass through the hole in the middle of the annulus.  We denote the set of annular non-crossing matchings of type $(n,m)$, modulo these relations, by $Ann(n,m)$.  If we wish to reference the larger collection of all annular non-crossing matchings with $N$ total endpoints, no matter how those endpoints are partitioned between inner and outer boundary components, we write $Ann(N)$.  Thus $Ann(N) = \lbrace M \in Ann(n,m) \ \vert \ n+m = N \rbrace$ and $Ann(N) \neq 0$ if and only if $N$ is even.

In many settings, it will be advantageous to sub-divide annular matchings according to the the number of curves that do not isotope to half-circles on one boundary component.  We define a \textbf{cross-cut} to be a curve in an annular non-crossing matching with one endpoint on the inside of the annulus and one endpoint on the outside of the annulus.  We denote the set of annular non-crossing matchings of type $(n,m)$ with precisely $k$ cross-cuts by $Ann_k(n,m)$, so that $Ann(n,m) = \bigcup_k Ann_k(n,m)$.  See Figure \ref{fig: annular matchings examples} for several quick examples.

\begin{figure}[h!]
\centering
\begin{tikzpicture}
[scale=1.2,auto=left,every node/.style={circle,inner sep=0pt}]
\draw (0,0) circle (.35cm);
\draw (0,0) circle (1cm);
\draw[thick] (165:1) to (165:0.35);
\draw[thick] (15:1) to (15:0.35);
\draw[thick] (50:0.35) arc (-13:198:0.25) ;
\draw[thick, bend right=45] (313:1) to (228:1);
\draw[thick, bend right=60] (290:1) to (250:1);
\end{tikzpicture}
\hspace{.7in}
\begin{tikzpicture}
[scale=1.2,auto=left,every node/.style={circle,inner sep=0pt}]
\draw (0,0) circle (.35cm);
\draw (0,0) circle (1cm);
\draw[thick, bend left=45] (0:1) to (60:1);
\draw[thick, bend left=45] (120:1) to (180:1);
\draw[thick, bend left=45] (240:1) to (300:1);
\end{tikzpicture}
\caption{An element of $Ann_2(6,4)$, and an element of $Ann_0(6,0)$.}
\label{fig: annular matchings examples}
\end{figure}
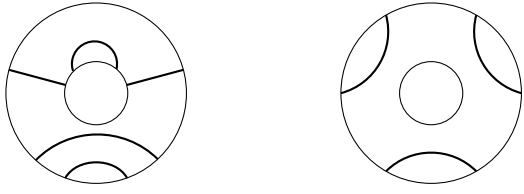

Observe that every annular matching may be isotoped so that all of its cross-cuts appear as straight chords that meet both boundary circles at a right angle. Also notice that $\vert Ann_k(n,m) \vert = 0$ unless $n-k$ and $m-k$ are both even.  When working with an element of $Ann_k(n,m)$, we will sometimes refer to the $(n-k)/2$ curves with both endpoints on the outside of the annulus as ``external half-circles", and to the $(m-k)/2$ curves with both endpoints on the inside of the annulus as ``internal half-circles".

\subsection{Outline of Results}
\label{subsec: outline of results}

The primary goal of this paper is to enumerate $\vert Ann(n,m) \vert$ for arbitrary non-negative integers $n$ and $m$.  This will be accomplished by enumerating $\vert Ann_k(n,m) \vert$ for arbitrary $n,m,k$ and then summing over $k$.  Section \ref{sec: basic annular results} will begin the process with a series of basic results about annular non-crossing matchings, laying the theoretical framework for our subsequent enumerations.  Along the way we will demonstrate a bijection between elements of $Ann_k(n,m)$ and sets of planar graphs that possess a single $k$-cycle and no cycles of any other size (Proposition \ref{thm: graph interpretation, no crosscuts}, Theorem \ref{thm: graph interpretation, general}).

Section \ref{sec: enumeration} presents our enumerative results.  Subsection \ref{subsec: m=0 enumeration and necklaces} begins with a consideration of the sets $Ann_k(2n+k,k)$, a sub-case that we refer to as ``maximal cross-cut annular matchings".  Burnside's Lemma will be used to prove the following, which later appears as Theorem \ref{thm: maximal cross-cut direct enumeration}:

\begin{introthm}
\label{thm: intro theorem 1}
Let $n$ and $k$ be non-negative integers, not both zero.  Then:
\begin{center}
$\vert Ann_k(2n+k,k)\vert = \displaystyle{\frac{1}{2n+k} \sum_{d | (2n+k,n)} \kern-4pt \phi(d) \binom{(2n+k)/d}{n/d}}$
\end{center}
Where $\phi(d)$ is Euler's totient function and $d$ runs over all common divisors of $2n+k$ and $n$.
\end{introthm}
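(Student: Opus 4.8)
The plan is to set up a Burnside (orbit-counting) computation on the set of maximal cross-cut annular matchings, exactly in the spirit of Goldbach--Tijdeman but adapted to the annulus. First I would fix combinatorial data that pins down an element of $Ann_k(2n+k,k)$ up to isotopy. Since all $k$ cross-cuts can be straightened to radial chords meeting both boundary circles at right angles, and there are no internal half-circles (as $m=k$), the matching is determined by: the cyclic arrangement of the $2n+k$ exterior endpoints, of which $k$ are cross-cut endpoints and the remaining $2n$ are paired off into $n$ external half-circles, together with the non-crossing matching structure on those $2n$ points inside each of the $k$ ``sectors'' cut out by the radial chords. I would argue that once we mark one exterior position as a basepoint, such a configuration is the same as a sequence of $2n+k$ symbols, $k$ of them ``cross-cut'' markers and $2n$ of them forming a linear non-crossing matching that respects the sectors --- but the cleanest bookkeeping is to say: a \emph{rooted} maximal cross-cut matching is equivalent to a choice of which $n$ of the $2n+k$ cyclic positions are ``openers'' versus the $n$ ``closers'' and $k$ ``crossers'', with the non-crossing/sector constraints being automatically satisfiable in exactly one way. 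That counts $\binom{2n+k}{n,n,k}$ rooted objects? No --- here the key reduction is that the matching data is forced, so a rooted object is just a cyclic word choosing $n$ positions out of $2n+k$ for one class, giving $\binom{2n+k}{n}$ rooted objects. Hmm, I need to be careful that the half-circle pairing is genuinely forced; I would prove this by the standard ``innermost arc'' argument once the crossers are fixed, using that crossers never separate the two endpoints of any half-circle on the outer circle.

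With the count of rooted objects established as $\binom{2n+k}{n}$ (choosing the $n$ ``left'' endpoints of half-circles among the $2n+k$ cyclic slots, the $k$ crosser slots and $n$ ``right'' slots being then determined by the forced non-crossing structure), I would let the cyclic group $\Z/(2n+k)\Z$ act by rotating the basepoint. Orbits of this action are precisely the unrooted matchings in $Ann_k(2n+k,k)$, because two rootings give isotopic annular matchings iff they differ by a rotation of the outer circle (and a rotation of the outer circle by one slot, dragging the cross-cuts along, is an allowed isotopy of the annulus). So Burnside gives
\[
\vert Ann_k(2n+k,k)\vert \;=\; \frac{1}{2n+k}\sum_{g \in \Z/(2n+k)\Z} \vert \mathrm{Fix}(g)\vert .
\]
The element $g$ of order $(2n+k)/d$ (equivalently, rotation by a multiple generating the subgroup of index $d$) fixes a rooted object iff the chosen set of $n$ ``left'' slots is invariant under rotation by $d$ slots; such invariant $n$-subsets of a $(2n+k)$-cycle exist iff $d \mid n$ (and automatically $d \mid 2n+k$), and then they correspond to $n/d$-subsets of a $(2n+k)/d$-cycle, of which there are $\binom{(2n+k)/d}{n/d}$. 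Collecting elements by the order of the subgroup they generate --- there are $\phi(d)$ group elements whose fixed-subgroup has index... more precisely, for each $d \mid (2n+k,n)$ there are $\phi(d)$ elements $g$ with $\gcd$-data forcing exactly the ``rotation by $d$'' invariance as the relevant condition --- yields
\[
\vert Ann_k(2n+k,k)\vert \;=\; \frac{1}{2n+k}\sum_{d \mid (2n+k,\,n)} \phi(d)\binom{(2n+k)/d}{n/d},
\]
which is the claimed formula.

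The main obstacle I anticipate is \emph{not} the Burnside bookkeeping but the reduction in the first paragraph: proving rigorously that a rooted maximal cross-cut matching is determined by, and realizable for, any cyclic placement of the $k$ crossers and $n$ half-circle ``openers'', i.e. that the external half-circle matching is uniquely forced and that no rotation-fixed-point subtleties arise from the half-circles themselves (only from the cyclic slot pattern). I would handle this with an inductive innermost-arc argument and by invoking the earlier structural results (the bijection of Proposition~\ref{thm: graph interpretation, no crosscuts} / Theorem~\ref{thm: graph interpretation, general} and the observation that cross-cuts straighten to radial chords) to reduce to a purely combinatorial statement about non-crossing partitions of a cyclically ordered set with some marked ``transparent'' points. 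A secondary point requiring care is the edge case where the matching has a nontrivial rotational symmetry whose quotient would force an edge through the central hole --- I would check that the ``no isotopy through the hole'' restriction does not remove any matchings from the count but only affects how symmetries act, and that this is exactly captured by requiring $d \mid n$ (so the half-circles, not just the crossers, are compatibly arranged) rather than merely $d \mid (2n+k)$.
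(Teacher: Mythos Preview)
Your proposal is correct and follows essentially the same route as the paper: act by $\Z_{2n+k}$ on ``rooted'' maximal cross-cut matchings, identify a rooted matching with the choice of the $n$ left-endpoints of exterior half-circles among the $2n+k$ outer slots (the paper makes exactly this reduction, using the recursive ``connect to the nearest available non-left endpoint on the right'' construction you call the innermost-arc argument), and then apply Burnside with the standard periodic-subset count $\binom{(2n+k)/d}{n/d}$ for each $d\mid(2n+k,n)$. Your only wobble is a harmless index/order conflation in the paragraph where you describe $g$ as having ``order $(2n+k)/d$'' while simultaneously writing the fixed-set count as $\binom{(2n+k)/d}{n/d}$; in the paper's convention $d$ is the \emph{order} of $g$, and once you pick one convention consistently the computation is identical.
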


Subsection \ref{thm: maximal cross-cut direct enumeration} will also exhibit a direct bijection between these maximal cross-cut annular matchings and binary combinatorial necklaces.  If $N_2(n_1,n_2)$ denotes the number of binary combinatorial necklaces with $n_1$ black beads and $n_2$ white beads, then Theorem \ref{thm: annular matchings = necklaces} will prove:

\begin{introthm}
\label{thm: intro theorem 2}
Let $n$ and $k$ be non-negative integers.  Then $\vert Ann_k(2n+k,k) \vert = N_2(n+k,n)$.
\end{introthm}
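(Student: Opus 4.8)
The plan is to construct an explicit bijection between maximal cross-cut annular matchings in $Ann_k(2n+k,k)$ and binary necklaces with $n+k$ black beads and $n$ white beads, rather than comparing the two enumeration formulas term-by-term. First I would observe what ``maximal cross-cut'' forces: an element of $Ann_k(2n+k,k)$ has all $k$ interior endpoints used by cross-cuts (since $m-k=0$, there are no internal half-circles), while the exterior boundary carries $k$ cross-cut feet together with $((2n+k)-k)/2 = n$ external half-circles. Isotope the matching to its canonical form with all $k$ cross-cuts drawn as straight radial chords. These chords cut the annulus into $k$ ``sectors,'' and within each sector the external half-circles form an ordinary linear non-crossing matching on the arc of the outer boundary between two consecutive cross-cut feet. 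So the combinatorial data of the matching is exactly: a cyclic sequence of $k$ cross-cuts, and for each of the $k$ gaps between them a linear non-crossing matching using some number of the $n$ external half-circles, with the gap sizes summing to $n$ — all taken up to rotation of the annulus (reflections being disallowed).

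Next I would encode each sector. Reading the outer boundary of a single sector from one cross-cut foot to the next, a non-crossing matching on $2j$ points is the standard Catalan/Dyck-path object, which I will record as a binary word of length $2j$ that is a balanced parenthesization (say, $1$ for an opening endpoint, $0$ for a closing endpoint read left to right — a Dyck word). Concatenating the $k$ sector words in cyclic order and inserting a single marker bead for each cross-cut foot, we get a cyclic binary word. The total number of ``half-circle beads'' is $2n$ and the number of ``cross-cut marker beads'' is $k$, for a total of $2n+k$ — not yet the right shape. The fix is to merge each cross-cut marker with the adjacent Dyck structure: I would use the classical bijection between Dyck words of semilength $j$ and binary necklace-fragments, and more precisely exploit that a cyclic concatenation of $k$ Dyck words of total semilength $n$, each prefixed by one extra ``black'' bead, is exactly the cycle-lemma encoding of a binary necklace with $n+k$ black beads and $n$ white beads: each Dyck word contributes $n_i$ black and $n_i$ white beads, and each cross-cut contributes one additional black bead, giving $n+k$ black and $n$ white overall. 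The cycle lemma / Cycle Lemma of Dvoretzky–Motzkin guarantees that every aperiodic-or-periodic cyclic word with more black than white beads has a canonical rotation exhibiting this block-of-Dyck-words decomposition, which is what makes the map well-defined on necklaces (rotation classes) and invertible.

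The key steps, in order, are: (1) reduce a maximal cross-cut matching to its radial-chord canonical form and extract the cyclic tuple $(w_1,\dots,w_k)$ of Dyck words with $\sum \mathrm{semilength}(w_i) = n$, well-defined up to cyclic rotation of the index $i$; (2) define the forward map to the cyclic binary word $b w_1 b w_2 \cdots b w_k$ where $b$ is a single black bead, and check it lands in binary words with $n+k$ black and $n$ white beads; (3) prove well-definedness on equivalence classes — rotations of the annulus correspond precisely to cyclic shifts that permute the $b$-delimited blocks, so necklace classes match annular-matching classes, and here one must be slightly careful about periodic matchings (a matching invariant under a rotation by $2\pi/\gcd$ maps to a periodic necklace, consistently); (4) construct the inverse using the Cycle Lemma: given any binary necklace with $n+k \ge n$ black beads, rotate to the unique representative whose black beads each initiate a minimal-length block that is ``$b$ followed by a Dyck word,'' peel off the $k$ blocks, and reconstruct the $k$ sectors. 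I expect step (4), invertibility, to be the main obstacle: one has to argue that the Cycle Lemma's canonical decomposition of a cyclic binary word into "black bead $+$ Dyck word" blocks yields \emph{exactly} $k$ blocks (not more, not fewer) when there are $n+k$ black and $n$ white beads — i.e.\ that the excess of black over white, which is $k$, equals the number of blocks — and that this decomposition is rotation-equivariant so it descends to necklaces. Once that combinatorial lemma is nailed down, the correspondence with $Ann_k(2n+k,k)$ is immediate from step (1), and since both sides are thereby in bijection with the same set of binary words modulo rotation, we conclude $\vert Ann_k(2n+k,k) \vert = N_2(n+k,n)$. As a sanity check one may also verify that plugging the necklace-counting formula $N_2(n+k,n) = \frac{1}{2n+k}\sum_{d \mid (n+k,n)} \phi(d)\binom{(2n+k)/d}{n/d}$ — noting $(n+k,n) = (2n+k,n)$ — recovers the expression in Theorem \ref{thm: intro theorem 1}, which confirms consistency of the two theorems.
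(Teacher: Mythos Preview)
Your bijection is at bottom the same one the paper uses: color each of the $2n+k$ outer endpoints black or white according to whether it is the ``closing'' endpoint of a half-circle or a cross-cut foot (black) versus the ``opening'' endpoint of a half-circle (white), and observe that this labeling is rotation-equivariant. The paper states this coloring directly, with no intermediate sector decomposition, and for the inverse uses the naive greedy rule --- walk around the necklace, join each white bead to the nearest not-yet-used black bead in the chosen direction, then turn the $k$ leftover black beads into cross-cut feet --- rather than invoking the Cycle Lemma. The greedy pairing and your Cycle-Lemma block decomposition are two faces of the same combinatorial fact, so for $k\ge 1$ your route is not wrong; it simply carries more scaffolding than necessary.

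Where your packaging genuinely fails is at $k=0$. With no cross-cuts there are no radial chords and hence no sectors, your cyclic word $bw_1\cdots bw_k$ degenerates to the empty word even when $n>0$, and the Cycle Lemma --- which needs a strictly positive black--white excess to yield its canonical $k$-block decomposition --- does not apply when the excess is zero. The identity $|Ann_0(2n,0)| = N_2(n,n)$ is still true and is handled uniformly by the paper's direct labeling-plus-greedy-inverse, but your sector/Cycle-Lemma presentation does not reach it. You would need either a separate argument for $k=0$ or to drop the sector language and argue straight from the bead labeling, at which point you have reproduced the paper's proof.
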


Our results for $\vert Ann_k(2n+k,k) \vert$ are then used in Subsection \ref{subsec: enumeration general} to enumerate $\vert Ann_k(2n+k,2m+k) \vert$ for the remaining choices of $n$, $m$, and $k$.  In particular, Theorem \ref{thm: general enumeration} will show that:

\begin{introthm}
\label{thm: intro theorem 3}
Let $n$, $m$ , and $k$ be non-negative integers with $m > 0$.  Then:
\begin{enumerate}
\item $\vert Ann_k(2n+k,2m+k) \vert = \vert Ann_0(2n,0) \vert \cdot \vert Ann_0(2m,0) \vert$ \ \ if $k = 0$, and
\item $\vert Ann_k(2n+k,2m+k) \vert = \displaystyle{\frac{k}{(2n+k)(2m+k)} \sum_{d | (2n+k,n,m)} \kern-10pt \phi(d) \binom{(2n+k)/d}{n/d} \binom{(2m+k)/d}{m/d}}$ \ \ if $k > 0$
\end{enumerate}
Where $\phi(d)$ is Euler's totient function and summations run over all common divisors of the given integers.
\end{introthm}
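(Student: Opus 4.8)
The plan is to dispose of the case $k=0$ by a topological separation argument and to treat $k>0$ through a block decomposition followed by Burnside's Lemma. \textbf{The case $k=0$.} An element of $Ann_0(2n,2m)$ is a disjoint union of $n$ external half-circles and $m$ internal half-circles with no cross-cut. First I would show that any such matching admits an essential simple closed curve $C$ in the annulus that is disjoint from every curve of the matching and separates all external half-circles from all internal ones. Cutting the annulus along the external half-circles (disjoint arcs with both endpoints on the outer boundary) decomposes it into several disks together with exactly one sub-annulus $R$ that retains the inner boundary circle; since no internal half-circle may cross an external one, all internal half-circles lie in $R$, and running the same argument inside $R$ yields the desired $C$, taken parallel to the component of $\partial R$ off the inner boundary. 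Given $C$, the matching restricts to an element of $Ann_0(2n,0)$ on the sub-annulus outside $C$ and to an element of $Ann_0(0,2m)$ on the sub-annulus inside $C$; conversely two such matchings glue along $C$, and one checks this is a bijection $Ann_0(2n,2m) \leftrightarrow Ann_0(2n,0) \times Ann_0(0,2m)$ ($C$ being unique up to isotopies avoiding the matching, as no endpoints lie between two competing choices). Finally the orientation-preserving homeomorphism of the annulus interchanging its two boundary components gives $|Ann_0(0,2m)| = |Ann_0(2m,0)|$, which is part (1). Establishing the existence of $C$ is the main obstacle in this case.

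\textbf{The case $k>0$.} Write $p = 2n+k$ and $q = 2m+k$. Put the $k$ cross-cuts in the radial normal form noted in the introduction, so that they cut the annulus into $k$ cyclically ordered rectangular regions $R_1,\dots,R_k$. Since no half-circle may cross a cross-cut, each half-circle lies in a single region with both endpoints on one of that region's boundary arcs; hence the data of the matching inside $R_j$ is precisely a linear non-crossing matching $L_j$ of order $a_j \ge 0$ on its outer arc together with an independent one $M_j$ of order $b_j \ge 0$ on its inner arc, subject only to $\sum_j a_j = n$ and $\sum_j b_j = m$. Because we admit only orientation-preserving homeomorphisms that preserve the two boundary components, the induced action on the regions is by cyclic rotation, and within a disk region a homeomorphism cannot alter a linear non-crossing matching; conversely any assignment of region data glues to a matching. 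Therefore $Ann_k(p,q)$ is in bijection with the set of $\Z/k$-orbits, under cyclic rotation of the index $j$, of tuples $( (a_1,L_1,b_1,M_1), \dots, (a_k,L_k,b_k,M_k) )$ with $L_j$, $M_j$ linear non-crossing matchings of orders $a_j$, $b_j$ and $\sum_j a_j = n$, $\sum_j b_j = m$. Pinning down that the radial normal form for the cross-cuts is unique up to exactly these symmetries is the step I expect to require the most care here.

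It then remains to count these orbits by Burnside's Lemma applied to $\Z/k$. A tuple is fixed by the rotation through $k/d$ positions (a group element of order $d$, for $d \mid k$) exactly when it is $(k/d)$-periodic, which forces $d \mid n$ and $d \mid m$ and leaves the first $k/d$ blocks free; since linear non-crossing matchings of order $a$ number the Catalan number $C_a$, the number of such fixed tuples is $( [x^{n/d}] C(x)^{k/d} )( [x^{m/d}] C(x)^{k/d} )$, where $C(x) = \sum_{a \ge 0} C_a x^a$. The classical identity $[x^N] C(x)^r = \frac{r}{2N+r} \binom{2N+r}{N}$ (see, e.g., \cite{Stanley}) rewrites the two factors as $\frac{k}{2n+k}\binom{(2n+k)/d}{n/d}$ and $\frac{k}{2m+k}\binom{(2m+k)/d}{m/d}$. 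Summing $\phi(d)$ times this product over all $d \mid k$ — equivalently, since terms with $d \nmid n$ or $d \nmid m$ vanish, over the common divisors of $k$, $n$, $m$, which are exactly the common divisors of $2n+k$, $n$, $m$ — and dividing by $k$ yields precisely the claimed expression. Checking the vanishing conventions and the degenerate parameter values ($n$ or $m$ zero, small $k$) is then routine.
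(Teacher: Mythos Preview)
Your argument is correct, but for $k>0$ it follows a genuinely different route from the paper's. The paper does not cut along the cross-cuts; instead it fixes all $2n+k$ outer endpoints and $2m+k$ inner endpoints at evenly spaced positions, lets $A$ be the set of matchings with those fixed endpoints (so $|A|=k\binom{2n+k}{n}\binom{2m+k}{m}$, the factor $k$ coming from the choice of how the cross-cut endpoints pair up across the annulus), and applies Burnside to the rotation action of $\Z_{(2n+k)(2m+k)}$ on $A$. Your approach instead cuts along the radial cross-cuts, encodes each of the $k$ resulting rectangles by a pair of linear non-crossing matchings, and applies Burnside to the much smaller group $\Z_k$ acting on $k$-tuples of such pairs; the fixed-point count then requires the Catalan convolution identity $[x^N]C(x)^r=\frac{r}{2N+r}\binom{2N+r}{N}$, which the paper never needs. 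The trade-off is that the paper's argument is more elementary (only binomial coefficients and the count of elements of given order in a cyclic group), while yours is structurally cleaner and explains \emph{why} the factor $k/((2n+k)(2m+k))$ appears: it is $(1/k)\cdot(k/(2n+k))\cdot(k/(2m+k))$, with the two Catalan-convolution constants separated from the Burnside averaging. For $k=0$ your separating-curve argument is essentially the same decomposition the paper records as Proposition~\ref{thm: splitting into one-sided product}, just phrased more topologically.
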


We close the paper with an appendix of tables giving outputs for various $Ann(n,m)$, $Ann_k(n,m)$, and $Ann(N)$, all calculated in Maple using Theorems \ref{thm: maximal cross-cut direct enumeration} and \ref{thm: general enumeration}.

\section{Basic Results About Annular Non-Crossing Matchings}
\label{sec: basic annular results}

In this section we present a series of foundational results about annular non-crossing matchings, some of which will be utilized to prove the more general enumerative results of Section \ref{sec: enumeration}.  We also take the opportunity to draw bijections between annular matchings and various classes of planar graphs, and relate the number of ``zero cross-cut" matchings in $Ann(2n,0)$ to $C_n$ and $\widetilde{C}_n$.

\begin{proposition}
\label{thm: reflection equivalence}
Let $n,m$ be non-negative integers.  Then $\vert Ann_k(n,m) \vert = \vert Ann_k(m,n) \vert$ for all $k \geq 0$.  In particular, $\vert Ann(n,m) \vert = \vert Ann(m,n) \vert$ for all $n,m \geq 0$.
\end{proposition}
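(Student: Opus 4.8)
The plan is to exhibit an explicit involution on annular non-crossing matchings that turns an element of $Ann_k(n,m)$ into an element of $Ann_k(m,n)$, and then observe that summing over $k$ yields the statement for $\vert Ann(n,m)\vert$. The natural candidate is the map induced by the "turn the annulus inside out" homeomorphism of the annulus — concretely, identify the annulus with $\lbrace z : 1 \leq \vert z\vert \leq 2\rbrace$ and apply the map $z \mapsto 2/\bar z$ (or any homeomorphism swapping the two boundary circles). Under such a map the interior boundary circle is sent to the exterior boundary circle and vice versa, so the $m$ interior endpoints become $m$ exterior endpoints and the $n$ exterior endpoints become $n$ interior endpoints, producing a matching of type $(m,n)$.

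First I would set up the homeomorphism carefully and note that it sends smooth non-intersecting curves to smooth non-intersecting curves, so it does send annular non-crossing matchings to annular non-crossing matchings. Second, I would check that it respects the equivalence relation defining $Ann(n,m)$: isotopies within the annulus (including endpoint slides) are conjugated to isotopies within the annulus, and a curve is forbidden from passing through the central hole before the map exactly when its image is, so the map descends to a well-defined function $Ann(n,m) \to Ann(m,n)$. Third, I would observe that a second application of an inside-out homeomorphism is isotopic to the identity on the annulus (the mapping class group of the annulus is generated by the Dehn twist and this reflection-type element, and the relevant element is an involution up to isotopy), so the map is a bijection with inverse of the same form. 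Fourth, I would note that a cross-cut — a curve with one endpoint on each boundary — is sent to a cross-cut, an internal half-circle is sent to an external half-circle, and conversely; hence the number of cross-cuts $k$ is preserved, giving $\vert Ann_k(n,m)\vert = \vert Ann_k(m,n)\vert$. Finally, summing over $k$ and using $Ann(n,m) = \bigcup_k Ann_k(n,m)$ (a disjoint union) gives $\vert Ann(n,m)\vert = \vert Ann(m,n)\vert$.

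The one genuine subtlety — the main obstacle — is verifying that the inside-out map is compatible with the equivalence relation and in particular that it is involutive \emph{modulo the allowed isotopies}, rather than on the nose. One has to be slightly careful because the paper's equivalence also forbids isotopies that drag an edge through the hole, and because orientation-reversing vs orientation-preserving choices matter (the problem statement treats reflection-related matchings as distinct, so one must make sure the chosen homeomorphism is the "inside-out" one and not an accidental reflection that would collapse distinct classes). I would address this by choosing the homeomorphism to be orientation-preserving on the annulus (e.g. $z \mapsto 2/\bar z$ is orientation-reversing, so instead use $z \mapsto 2/z$ composed with complex conjugation... ) — more cleanly, pick the homeomorphism $(r,\theta)\mapsto(3-r,\theta)$ in polar coordinates on $1\leq r\leq 2$, which is orientation-reversing on the plane but, crucially, one checks directly that it is an involution and that it does not introduce any spurious identifications because it is applied uniformly to all matchings. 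I expect this bookkeeping about orientation and the forbidden-through-the-hole condition to be the only place requiring care; everything else is formal.

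Alternatively, if a topological argument feels too slippery, one could give a purely combinatorial proof: encode an element of $Ann_k(n,m)$ by a cyclic word recording, as one traverses the outer circle, the pattern of endpoints (openings/closings of external half-circles and the $k$ cross-cut feet), together with the analogous data on the inner circle and a bijection between the two sets of cross-cut feet compatible with planarity; the inside-out map simply interchanges the roles of the two circles in this encoding, and one verifies it is a bijection on the level of these combinatorial data. I would present the topological version as the main argument since it is shorter, and the statement for $\vert Ann(n,m)\vert$ then follows immediately by summation over $k$.
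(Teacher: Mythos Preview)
Your approach is correct and essentially identical to the paper's: the paper first isotopes cross-cuts to straight radial chords and then applies ``reflection across the core of the annulus,'' which is precisely your map $(r,\theta)\mapsto(3-r,\theta)$, noting that cross-cuts go to cross-cuts and the bijection descends to equivalence classes. Your concerns about orientation are a bit overstated --- the core-reflection need only carry isotopy classes to isotopy classes bijectively, not be an allowed isotopy itself --- but you resolve them correctly.
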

\begin{proof}
Begin by isotoping elements of $Ann_k(n,m)$ and $Ann_k(m,n)$ so that all cross-cuts appear as straight chords orthogonal to the boundary circles.  Reflection across the ``core" of the annulus then maps every cross-cut to itself, and defines a bijection between $Ann_k(n,m)$ and $Ann_k(m,n)$ for any $k \geq 0$.
\end{proof}

The primary use of Proposition \ref{thm: reflection equivalence} is that it will allow us to restrict our attention to $Ann(n,m)$ and $Ann_k(n,m)$ such that $n \geq m$.  Yet even within the realm where $n \geq m$, there will be specific ``easy" choices for $n$ and $m$ that will prove to have the most useful combinatorial interpretations.  The first of these special cases is $Ann(2n,0) = Ann_0(2n,0)$, corresponding to the situation where all curves in the matchings are external half-circles.

\begin{proposition}
\label{thm: graph interpretation, no crosscuts}
Let $n$ be any non-negative integer.  Then $\vert Ann(2n,0) \vert$ equals the number of unrooted planar trees with a distinguished vertex and $n$ additional vertices.
\end{proposition}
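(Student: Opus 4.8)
The plan is to set up an explicit bijection between $Ann(2n,0)$ and unrooted planar trees carrying a distinguished vertex together with $n$ further vertices, mimicking the region-to-vertex construction recalled in Figures \ref{fig: matchings to trees, half-plane} and \ref{fig: matchings to trees, circular}. Given $M \in Ann(2n,0)$, all $n$ curves are external half-circles based on the outer boundary circle, and (by the remark following the definition of cross-cuts) $M$ may be isotoped so every curve is a genuine half-circle near the outer circle while the inner circle sits untouched in one distinguished region. I would place one node in each connected component of the complement of $M$ inside the annulus, designate the node lying in the component that contains the inner boundary circle as the distinguished vertex, and join two nodes by an edge whenever their regions are separated by exactly one of the $n$ half-circles. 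Since each half-circle locally separates the plane, this yields a connected graph; the standard Euler-characteristic / induction-on-$n$ argument (add half-circles one at a time, each new innermost-available half-circle cuts off one new region and contributes one new edge) shows the graph is a tree with exactly $n$ edges, hence $n+1$ vertices, one of which is the distinguished one. The annulus is embedded in the plane, so the tree inherits a planar embedding; forgetting the actual coordinates but keeping the cyclic order of edges at each vertex gives an unrooted planar tree with a distinguished vertex and $n$ additional vertices.

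Next I would argue this map is well-defined on isotopy classes: an isotopy of the annulus that does not let curves or endpoints collide induces a correspondence of complementary regions preserving which region meets the inner circle and preserving the separated-by-a-half-circle adjacency, hence sends $M$ to an isomorphic decorated planar tree; reflections are excluded on both sides in the same way (a reflection reverses all the cyclic orders simultaneously), so the map respects the equivalence relation defining $Ann(2n,0)$ and the planar-isomorphism relation on trees. For injectivity and surjectivity I would construct the inverse: given an unrooted planar tree $T$ with a distinguished vertex $v_0$ and $n$ other vertices, embed $T$ in the annulus with $v_0$ placed so that its region contains the inner boundary circle, thicken each edge to a half-circle arc attached to the outer circle that separates the region of one endpoint from the region of the other, and read off the resulting matching $M \in Ann(2n,0)$. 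The cyclic order of the $2n$ endpoints along the outer circle is dictated by a depth-first traversal of $T$ from $v_0$ respecting the planar structure, so $M$ is determined up to the allowed isotopies; checking that the two constructions are mutually inverse is then routine.

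The main obstacle I anticipate is not the combinatorics of trees but the topological bookkeeping peculiar to the annulus: I must be careful that the ``distinguished vertex'' genuinely corresponds to the region hosting the inner circle and that no curve is forced to pass through the central hole, so that, unlike the circular case, there is a canonical choice of root-like vertex rather than an $n$-fold rotational ambiguity. Concretely, I need to verify that for any $M \in Ann(2n,0)$ exactly one complementary region abuts the inner boundary circle (this is where $m=0$ is used — no curve touches the inner circle, so the annular neighborhood of the inner circle lies in a single region), and that the exclusion of ``isotopies through the hole'' is exactly what prevents two matchings with non-isomorphic decorated trees from being identified. Once that point is pinned down, the Euler-characteristic count and the inverse construction go through as in the classical Catalan-to-tree bijection, and the proposition follows.
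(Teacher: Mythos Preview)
Your proposal is correct and follows exactly the same approach as the paper: the region-to-vertex construction with the distinguished vertex placed in the unique region adjacent to the inner boundary circle, and edges corresponding to the separating half-circles. The paper's own proof is considerably terser---it simply asserts the bijection by analogy with Figures~\ref{fig: matchings to trees, half-plane} and~\ref{fig: matchings to trees, circular} and notes that the hole is absorbed into the internal region---whereas you supply the Euler-characteristic justification that the result is a tree, an explicit inverse via depth-first traversal, and a careful check of well-definedness under isotopy; but the underlying idea is identical.
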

\begin{proof}
The required bijection is analogous to the constructions of Figures \ref{fig: matchings to trees, half-plane} and \ref{fig: matchings to trees, circular} for linear and circular matchings.  We add one vertex for each region in the matching and connect two vertices with an edge if they are separated by a half-circle:
\begin{center}
\begin{tikzpicture}
[scale=1.1,auto=left,every node/.style={circle,inner sep=0pt}]
	\draw (0,0) circle (.35cm);
	\draw (0,0) circle (1cm);
	\draw[thick, bend left=40] (87:1) to (143:1);
	\draw[thick, bend left=30] (65:1) to (165:1);
	\draw[thick, bend left=45] (210:1) to (270:1);
	\draw[thick, bend left=45] (300:1) to (0:1);
	\node[fill=white,draw,inner sep=2.2pt,thick] (n1) at (30:.7) {};
	\node[fill=black,draw,inner sep=1.5pt,thick] (n2) at (115:.85) {};
	\node[fill=black,draw,inner sep=1.5pt,thick] (n3) at (90:.7) {};
	\node[fill=black,draw,inner sep=1.5pt,thick] (n4) at (240:.85) {};
	\node[fill=black,draw,inner sep=1.5pt,thick] (n5) at (330:.85) {};
\end{tikzpicture}
\hspace{0.3in}
\scalebox{2.5}{\raisebox{10pt}{$\Leftrightarrow$}}
\hspace{0.3in}
\begin{tikzpicture}
	[scale=1.2,auto=left,every node/.style={circle,inner sep=0pt}]
	\node[fill=white,draw,inner sep=2.2pt,thick] (n1) at (30:.7) {};
	\node[fill=black,draw,inner sep=1.5pt,thick] (n2) at (115:.85) {};
	\node[fill=black,draw,inner sep=1.5pt,thick] (n3) at (90:.7) {};
	\node[fill=black,draw,inner sep=1.5pt,thick] (n4) at (240:.85) {};
	\node[fill=black,draw,inner sep=1.5pt,thick] (n5) at (330:.85) {};
	\draw[thick] (n1) to (n3);
	\draw[thick] (n3) to (n2);
	\draw[thick] (n1) to (n4);
	\draw[thick] (n1) to (n5);
\end{tikzpicture}
\end{center}
Here the distinguished vertex is placed in the sole region that borders the interior boundary of the annulus.  In placing the edges for our graph, we disregard whether that edge would have passed through the hole in the center of the annulus (we treat the hole as part of the internal region).  As the half-circles in our matchings may be cyclically rotated around the center of the annulus in ``blocks", this gives us the desired notion of equivalence for our planar graphs.
\end{proof}

The class of planar graphs from Proposition \ref{thm: graph interpretation, no crosscuts} is equivalent to rooted planar graphs with $n$ non-root vertices, as long as cyclic reordering of subtrees around the root vertex gives equivalent trees.  This interpretation identifies $\vert Ann(2n,0) \vert$ with the $n^{th}$ entry of A003239 \cite{OEIS}.  See \cite{Harary} and \cite{Stanley2} for further bijections involving $\vert Ann(2n,0) \vert$.

Via the planar graph bijections of Section \ref{sec: background} and Proposition \ref{thm: graph interpretation, no crosscuts}, it is immediate that $\widetilde{C}_n \leq \vert Ann(2n,0) \vert \leq C_n$ for all $n \geq 0$.  One can easily verify that $\widetilde{C}_n = \vert Ann(2n,0) \vert = C_n$ for both $n=0$ and $n=1$.  For $n=2$ we have $\vert Ann(4,0) \vert = C_2 =2$ yet $\widetilde{C}_2 = 1$.  As shown in the following proposition, $n=2$ is the largest value of $n$ for which any of the three quantities are equal:

\begin{proposition}
\label{thm: zero crosscuts inequalities}
For all $n \geq 3$ we have $\widetilde{C}_n \lneq \vert Ann(2n,0) \vert \lneq C_n$. 
\end{proposition}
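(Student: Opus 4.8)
The plan is to exhibit the weak chain $\widetilde{C}_n \le |Ann(2n,0)| \le C_n$ as arising from two explicit surjections of finite sets, and then to show that for $n \ge 3$ neither of these surjections is injective. Since a non-injective surjection of finite sets strictly lowers cardinality, this yields both strict inequalities simultaneously.

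First I would fix the bookkeeping. By the classical Catalan bijection, $C_n$ is the number of ordered rooted trees with $n$ non-root vertices; by Proposition \ref{thm: graph interpretation, no crosscuts} and the remark following it, $|Ann(2n,0)|$ counts those same trees modulo cyclic reordering of the subtrees hanging from the root; and by the bijection illustrated in Figure \ref{fig: matchings to trees, circular}, $\widetilde{C}_n$ counts unrooted planar trees with $n$ edges. Let $\pi_1$ be the map sending an ordered rooted tree to its class under rotation of the root's subtrees, and let $\pi_2$ be the map sending an element of $Ann(2n,0)$ (an unrooted planar tree together with a distinguished vertex) to the underlying unrooted planar tree. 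Both maps are manifestly surjective, which re-proves the weak inequalities already observed in the text.

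For the strict upper bound, for $n \ge 3$ I would take $T$ to be the ordered rooted tree whose root has exactly two children, one of them a leaf and the other being the root of a path with $n-1 \ge 2$ edges. Interchanging these two children produces an ordered rooted tree $T' \neq T$ — the two root-subtrees are non-isomorphic, which is exactly where $n \ge 3$ is used — with $\pi_1(T') = \pi_1(T)$. Hence $\pi_1$ is not injective and $|Ann(2n,0)| \lneq C_n$. For the strict lower bound, note that for $n \ge 3$ every unrooted planar tree $T$ with $n$ edges has $n+1 \ge 4$ vertices and therefore contains both a leaf and a vertex of degree at least $2$; since automorphisms of $T$ preserve vertex degrees, these two vertices lie in different automorphism orbits, so the fibre $\pi_2^{-1}(T)$ — which is precisely the set of orbits of $\mathrm{Aut}(T)$ acting on the vertices of $T$ — has at least two elements. (Concretely, when $T$ is a star one may take the distinguished vertex to be the center in one matching and a leaf in another.) Thus $\pi_2$ is not injective and $\widetilde{C}_n \lneq |Ann(2n,0)|$.

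The main thing that needs care is getting the two quotient descriptions exactly right: verifying that $\pi_1$ identifies ordered rooted trees only through rotations at the root and through no symmetry deeper in the tree, and that the fibre of $\pi_2$ over $T$ is genuinely the orbit set $V(T)/\mathrm{Aut}(T)$. Once that is pinned down, together with the easy check that the small witness trees behave as claimed (the step that forces the hypothesis $n \ge 3$), the conclusion reduces to the elementary fact that a non-injective surjection of finite sets is strictly cardinality-decreasing; the rest is immediate.
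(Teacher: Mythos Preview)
Your approach is correct and is essentially the paper's argument transported through the tree bijections. The paper works directly with matchings, defining surjections $\phi\colon\{\text{linear matchings}\}\to Ann(2n,0)$ (wrap the $x$-axis into the outer boundary) and $\psi\colon Ann(2n,0)\to\{\text{circular matchings}\}$ (delete the hole), and then exhibits one explicit collapsing pair for each when $n\ge 3$; under the dictionaries of Figures~\ref{fig: matchings to trees, half-plane}, \ref{fig: matchings to trees, circular} and Proposition~\ref{thm: graph interpretation, no crosscuts} these become exactly your $\pi_1$ and $\pi_2$. The quotient descriptions you flag as needing care are precisely what the remark following Proposition~\ref{thm: graph interpretation, no crosscuts} supplies, so that step is fine. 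Your lower-bound argument is a touch sharper than the paper's: the degree observation shows that \emph{every} fibre of $\pi_2$ has size at least two once $n\ge 2$, whereas the paper just produces a single collapsing pair. One small slip in your $\pi_1$ witness: if the second child heads a path with $n-1$ edges, then $T$ has $n+1$ non-root vertices rather than $n$; replacing $n-1$ by $n-2$ fixes the count, and the non-isomorphism of the two root-subtrees then reads $n-2\ge 1$, i.e.\ exactly $n\ge 3$.
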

\begin{proof}
We define a map $\phi$ from the set of all linear non-crossings of order $n$ to $Ann(2n,0)$ by identifying the endpoints of the x-axis and placing the resulting circle as the outer boundary of the annulus.  We then define a map $\psi$ from $Ann(2n,0)$ to the set of all circular non-crossing matchings of order $n$ by ``deleting" the hole in the middle of the annulus.
\begin{center}
\raisebox{24pt}{$\phi ($
\begin{tikzpicture}
[scale=1,auto=left,every node/.style={circle,fill=black}]
	\node[inner sep=0pt] (left) at (0,0) {};
	\node[inner sep=0pt] (right) at (1.5,0) {};
	\draw[thick] (left) to (right);
	\put(15,1){$A$}
\end{tikzpicture}
$) \ = \ $}
\begin{tikzpicture}
[scale=1,auto=left,every node/.style={circle,inner sep=0pt}]
	\draw (0,0) circle (1cm);
	\draw (0,0) circle (.35cm);
	\node (A) at (270:.8) {A};
\end{tikzpicture}
\hspace{.5in}
\raisebox{24pt}{$\psi ($}
\begin{tikzpicture}
[scale=1,auto=left,every node/.style={circle,inner sep=0pt}]
	\draw (0,0) circle (1cm);
	\draw (0,0) circle (.35cm);
	\node (A) at (270:.8) {A};
\end{tikzpicture}
\raisebox{24pt}{$) \ = \ $}
\begin{tikzpicture}
[scale=1,auto=left,every node/.style={circle,inner sep=0pt}]
	\draw (0,0) circle (1cm);
	\node (A) at (270:.8) {A};
\end{tikzpicture}
\end{center}
Both $\phi$ and $\psi$ are clearly well-defined and surjective for all $n \geq 0$.  To see that neither map is injective for $n \geq 3$, let $A$ be some non-empty collection of half-circles and notice that:

\begin{center}
\raisebox{18pt}{
\raisebox{4pt}{$\phi ($}
\begin{tikzpicture}
[scale=.5,auto=left,every node/.style={circle,fill=black,inner sep=0pt}]
	\node[inner sep=0pt] (left) at (1,0) {};
	\node[inner sep=0pt] (right) at (6,0) {};
	\draw[thick] (left) to (right);
	\draw[thick] (4.5,0) arc (0:180:.5);
	\draw[thick] (5.5,0) arc (0:180:1.5);
	\put(20,1){$A$}
\end{tikzpicture}
\raisebox{4pt}{$) \ = \ \phi ($}
\begin{tikzpicture}
[scale=.5,auto=left,every node/.style={circle,fill=black,inner sep=0pt}]
	\node[inner sep=0pt] (left) at (1,0) {};
	\node[inner sep=0pt] (right) at (6,0) {};
	\draw[thick] (left) to (right);
	\draw[thick] (4.5,0) arc (0:180:1.5);
	\draw[thick] (3.5,0) arc (0:180:.5);
	\put(70,1){$A$}
\end{tikzpicture}
\raisebox{4pt}{$)$}}
\hspace{.5in}
\raisebox{24pt}{$\psi ($}
\begin{tikzpicture}
[scale=1,auto=left,every node/.style={circle,inner sep=0pt,fill=none}]
	\draw (0,0) circle (.35cm);
	\draw (0,0) circle (1cm);
	\draw[thick,bend left=60] (270:1) to (340:1);
	\node (A) at (235:.8) {A};
\end{tikzpicture}
\raisebox{24pt}{$) \ = \ \psi ($}
\begin{tikzpicture}
[scale=1,auto=left,every node/.style={circle,inner sep=0pt,fill=none}]
	\draw (0,0) circle (.35cm);
	\draw (0,0) circle (1cm);
	\draw[thick,bend left=60] (200:1) to (270:1);
	\node (A) at (235:.8) {A};
\end{tikzpicture}
\raisebox{24pt}{$)$}
\end{center}

\vspace{-.2in}

\end{proof}

With the zero cross-cuts case well-understood, we expand our attention to $Ann_k(n,m)$ with $k \geq 1$.  In what follows we re-index variables to consider $Ann_k(2n+k,2m+k)$, as this alternative notation explicitly references the presence of $n$ external half-circles and $m$ internal half-circles.  The cases that will prove most useful are what we refer to as \textbf{maximal cross-cut annular matchings}.  In maximal cross-cut annular matchings, the only endpoints on the interior boundary belong to cross-cuts.  In our new notation this implies that $m=0$, so that we are dealing with sets of the form $Ann_k(2n+k,k)$.  Notice that the previously considered sets $Ann(2n,0) = Ann_0(2n+0,0)$ qualify as maximal cross-cut annular matchings.

When $k=1$, maximal cross-cut annular matchings are in bijection with the Catalan numbers.  As shown in Figure \ref{fig: 1 crosscut equals Catalan numbers}, a bijection of such annular matchings with linear non-crossing matchings is realized by identifying the outer boundary of the annulus with the real line, in such a way that the outer endpoint of the sole cross-cut corresponds to $\pm \infty$.

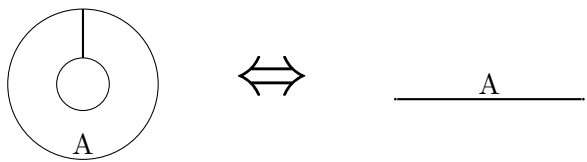
\begin{figure}[h!]
\begin{center}
\begin{tikzpicture}
[scale=1,auto=left,every node/.style={circle,inner sep=0pt,fill=none}]
	\draw (0,0) circle (.35cm);
	\draw (0,0) circle (1cm);
	\draw[thick] (90:1) to (90:0.35);
	\node (A) at (270:.8) {A};
\end{tikzpicture}
\hspace{0.3in}
\scalebox{2.5}{\raisebox{10pt}{$\Leftrightarrow$}}
\hspace{0.3in}
\raisebox{18pt}{
\begin{tikzpicture}
[scale=1,auto=left,every node/.style={circle,fill=black}]
	\node[inner sep=0pt] (left) at (0,0) {};
	\node[inner sep=0pt] (right) at (2.5,0) {};
	\draw[thick] (left) to (right);
	\node[fill=none] (A) at (1.25,.2) {A};
\end{tikzpicture}}
\end{center}

\vspace{-.15in}

\caption{The bijection between $Ann_1(2n+1,1)$ and linear non-crossing matchings of order $n$.}
\label{fig: 1 crosscut equals Catalan numbers}
\end{figure}

Maximal cross-cut annular matchings will be directly enumerated in Subsection \ref{subsec: m=0 enumeration and necklaces} for all $k \geq 0$, and those enumerations will be necessary building blocks for the general enumerations of Subsection \ref{subsec: enumeration general}.  Although the full importance of the maximal cross-cut case will not become obvious until those subsections, we pause to prove one useful property shared by maximal cross-cut matchings and annular matchings with zero cross-cuts:

\begin{proposition}
\label{thm: splitting into one-sided product}
Let $n,m,k$ be non-negative integers.  If $m=0$ or $k=0$ then:
\begin{center}
$\vert Ann_k (2n+k,2m+k) \vert = \vert Ann_k(2n+k,k) \vert \ \vert Ann_k(2m+k,k) \vert$
\end{center}
\end{proposition}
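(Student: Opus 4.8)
The plan is to show that any element of $Ann_k(2n+k,2m+k)$ decomposes canonically into an ``exterior piece'' lying in $Ann_k(2n+k,k)$ and an ``interior piece'' lying in $Ann_k(2m+k,k)$, and that this decomposition is reversible. First I would isotope a given matching $M \in Ann_k(2n+k,2m+k)$ so that all $k$ cross-cuts appear as straight radial chords meeting both boundary circles orthogonally. These $k$ chords cut the annulus into $k$ ``sectors'' (when $k \geq 1$), each of which is topologically a disk; every external half-circle lies entirely within one sector and meets only the outer arc of that sector's boundary, and similarly every internal half-circle lies within one sector and meets only the inner arc. The key topological observation is that, in an annulus, a non-crossing curve with both endpoints on the outer boundary cannot ``link'' a curve with both endpoints on the inner boundary: once the cross-cuts are fixed as radial chords, the external half-circles and internal half-circles are forced into disjoint regions and their relative configuration is independent. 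This lets me define an exterior matching $M_{\mathrm{ext}}$ by erasing all internal half-circles (replacing the inner boundary with $k$ marked points only, the cross-cut feet), giving an element of $Ann_k(2n+k,k)$, and an interior matching $M_{\mathrm{int}}$ by erasing all external half-circles, giving an element of $Ann_k(2m+k,k)$ after the reflection-type identification of Proposition \ref{thm: reflection equivalence}.

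Next I would construct the inverse map. Given a pair $(M_1, M_2) \in Ann_k(2n+k,k) \times Ann_k(2m+k,k)$, both $M_1$ and $M_2$ have exactly $k$ cross-cuts; isotope each so its cross-cuts are radial chords, and moreover arrange the cross-cut feet on the inner circle of $M_1$ and on the (image of the) inner circle of $M_2$ to coincide at the same $k$ angular positions. Then I glue: keep the outer-boundary data and external half-circles of $M_1$, keep the inner-boundary data and internal half-circles of $M_2$, and keep the shared cross-cuts. This produces a well-defined element of $Ann_k(2n+k,2m+k)$, and the two constructions are mutually inverse up to the allowed isotopies, establishing the bijection and hence the product formula. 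The case $k=0$ is a degenerate instance of the same idea but needs separate comment: with no cross-cuts there are no radial chords to fix a reference frame, so the inner boundary carries no endpoints at all (since $m=0$ forces $2m+0=0$), $M$ consists solely of external half-circles together with the empty inner configuration, and $\vert Ann_0(2n,0)\vert = \vert Ann_0(2n,0)\vert \cdot \vert Ann_0(0,0)\vert$ holds trivially because $\vert Ann_0(0,0)\vert = 1$. Symmetrically, if $k=0$ but $m>0$, the hypothesis ``$m=0$ or $k=0$'' still only asks for the case actually needed in later sections; but to be safe I would note that when $k=0$ the annulus-with-no-cross-cuts again splits along its core circle into an outer part and an inner part with no half-circle of one type able to link a half-circle of the other, giving the same product.

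The main obstacle I anticipate is making the ``no linking'' claim airtight: in the disk (linear or circular matchings) non-crossing is a purely combinatorial condition, but in the annulus one must genuinely use that (i) isotopies are not allowed to push an edge through the central hole, and (ii) once the $k$ cross-cuts are straightened to radial chords, each complementary sector is a disk in which the external and internal half-circles attach to disjoint boundary arcs. I would want to argue carefully that the isotopy class of $M$ is determined by the isotopy classes of $M_{\mathrm{ext}}$ and $M_{\mathrm{int}}$ together with the (cyclic) matching of cross-cut feet — which, since all $k$ cross-cuts are present in both pieces and the feet can be slid freely along the boundary circles, carries no extra information. A secondary, more bookkeeping-level point is verifying that the reflection identification $Ann_k(2m+k,k) \cong Ann_k(k,2m+k)$ from Proposition \ref{thm: reflection equivalence} is applied with the correct orientation so that the gluing is consistent; this is routine but worth stating explicitly.
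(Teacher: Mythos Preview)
Your decomposition map --- erase the internal half-circles to get $M_{\mathrm{ext}} \in Ann_k(2n+k,k)$, erase the external half-circles to get $M_{\mathrm{int}} \in Ann_k(k,2m+k)$ --- is exactly the map $\phi$ the paper uses. The problem is that your argument for bijectivity never invokes the hypothesis ``$m=0$ or $k=0$'', and as written it would establish the product formula for \emph{all} $n,m,k$ with $k\geq 1$. That conclusion is false: the paper remarks immediately after the proposition that equality fails whenever $k \geq 2$ and $n,m \geq 1$, and indeed one checks $\lvert Ann_2(4,4)\rvert = 2$ while $\lvert Ann_2(4,2)\rvert \cdot \lvert Ann_2(4,2)\rvert = 1\cdot 1 = 1$.

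The specific gap is the claim that ``the cyclic matching of cross-cut feet \ldots\ carries no extra information''. It does. When $k \geq 2$ and both boundaries carry half-circles, the glued matching depends on how the $k$ sectors of $M_1$ are aligned with the $k$ sectors of $M_2$; different alignments can yield inequivalent elements of $Ann_k(2n+k,2m+k)$, while neither $M_1$ nor $M_2$ alone carries a preferred labeling of its sectors. In your inverse construction, the phrase ``arrange the cross-cut feet \ldots\ to coincide at the same $k$ angular positions'' hides exactly this choice. The paper sidesteps the issue by actually using the hypothesis: when $m=0$ the second factor $Ann_k(k,k)$ is a singleton (just $k$ cross-cuts, with full $k$-fold rotational symmetry), so $\phi$ is trivially a bijection; when $k=0$ there are no cross-cuts, so external and internal half-circles rotate independently around their respective boundary circles and never interact, again giving a bijection. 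Your $k=0$ discussion is essentially this second argument and is fine; what is missing is to restrict the $k\geq 1$ argument to $m=0$ and observe that it then becomes trivial rather than attempting a general gluing.
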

\begin{proof}
We define a map $\phi: Ann_k(2n+k,2m+k) \rightarrow Ann_k(2n+k,k) \oplus Ann_k(k,2m+k)$ that deletes internal half-circles in the first coordinate and deletes external half-circles in the second coordinate, as in the example below:

\begin{center}
\begin{tikzpicture}
[scale=1,auto=left,every node/.style={circle,inner sep=0pt}]
	\draw (0,0) circle (.35cm);
	\draw (0,0) circle (1cm);
	\draw[thick] (180:1) to (180:0.35);
	\draw[thick] (0:1) to (0:0.35);
	\draw[thick] (50:0.35) arc (-13:198:0.25) ;
	\draw[thick, bend right=45] (313:1) to (228:1);
	\draw[thick, bend right=60] (290:1) to (250:1);
	\draw[thick, bend left=60] (35:1) to (75:1);
	\draw[thick, bend left=60] (105:1) to (145:1);
\end{tikzpicture}
\hspace{4pt}
\raisebox{22pt}{\scalebox{2.2}{$\mapsto$}}
\hspace{1pt}
\raisebox{17pt}{\scalebox{3.5}{$($}}
\begin{tikzpicture}
[scale=1,auto=left,every node/.style={circle,inner sep=0pt}]
	\draw (0,0) circle (.35cm);
	\draw (0,0) circle (1cm);
	\draw[thick] (180:1) to (180:0.35);
	\draw[thick] (0:1) to (0:0.35);
	\draw[thick, bend right=45] (313:1) to (228:1);
	\draw[thick, bend right=60] (290:1) to (250:1);
	\draw[thick, bend left=60] (35:1) to (75:1);
	\draw[thick, bend left=60] (105:1) to (145:1);
\end{tikzpicture}
\hspace{-10pt}
\raisebox{5pt}{\scalebox{2.0}{$\ , \ $}}
\hspace{-10pt}
\begin{tikzpicture}
[scale=1,auto=left,every node/.style={circle,inner sep=0pt}]
	\draw (0,0) circle (.35cm);
	\draw (0,0) circle (1cm);
	\draw[thick] (180:1) to (180:0.35);
	\draw[thick] (0:1) to (0:0.35);
	\draw[thick] (50:0.35) arc (-13:198:0.25) ;
\end{tikzpicture}
\raisebox{17pt}{\scalebox{3.5}{$)$}}
\end{center}
This map $\phi$ is clearly a bijection whenever $m=0$.  To see that $\phi$ is a bijection when $k=0$, notice that the lack of cross-cuts in the $k=0$ case means that the internal half-circles and external half-circles may be isotoped independently around their respective boundary components and hence ``do not interact".
\end{proof}

It can be shown that the equality of Proposition \ref{thm: splitting into one-sided product} holds precisely when $n=0$ or $m=0$ or $k<2$.  If $k \geq 2$, $n \geq 1$, and $m \geq 1$ all hold, the left side of the expression always proves to be strictly larger than the right side.  However, only the $m = 0$ and $k = 0$ cases are needed in Section \ref{sec: enumeration}, motivating our omission of the more general result.

For our final result of this section, we adapt the planar graph bijection of Proposition \ref{thm: graph interpretation, no crosscuts} to the general case of $Ann_k(2n+k,2m+k)$.  Although the resulting language of Theorem \ref{thm: graph interpretation, general} is arguably rather contrived, it combines with Proposition \ref{thm: graph interpretation, no crosscuts} to give a succinct geometric characterization of any subset of annular matchings that is not $Ann_0(2n,2m)$ with $n,m > 0$.

\begin{theorem}
\label{thm: graph interpretation, general}
Let $n,m,k$ be non-negative integers and let $k \geq 1$.  Then $\vert Ann_k(2n+k,2m+k) \vert$ equals the number of connected planar graphs such that:
\begin{enumerate}
\item The only cycle in the graph is a single $k$-cycle.
\item There are $n$ edges within the cycle.
\item There are $m$ edges outside the cycle.
\end{enumerate}
\end{theorem}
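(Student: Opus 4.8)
The plan is to build the bijection by generalizing the region-adjacency construction of Proposition \ref{thm: graph interpretation, no crosscuts}. Given $M \in Ann_k(2n+k, 2m+k)$ with $k \geq 1$, first isotope $M$ so that all $k$ cross-cuts are straight radial chords. Place one vertex in each connected region of the complement of $M$ inside the annulus, and connect two vertices by an edge whenever the corresponding regions share a curve-segment of $M$ as part of their common boundary. The $k$ cross-cuts divide the annular neighborhood of the core into $k$ ``sectors''; the regions immediately adjacent to both boundary circles will be shown to form exactly one cycle, and I would argue that this cycle has length $k$ because there are $k$ cross-cuts, each contributing one edge of that cycle. Each external half-circle, being a nested arc on the outer boundary, contributes one edge lying outside this $k$-cycle (in the sense of the planar embedding — on the ``outer'' side), and symmetrically each internal half-circle contributes one edge on the inner side; this gives the counts $n$ and $m$ in items (2) and (3). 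Acyclicity away from the central cycle follows because the external half-circles form a nested (forest) structure in each sector, exactly as in the linear case, and likewise for internal half-circles.

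The key steps, in order: (i) define the map $M \mapsto G_M$ precisely, including the planar embedding inherited from the annulus; (ii) show $G_M$ is connected — every region touches the ``dual'' adjacency graph via the cross-cuts, which chain the sectors together; (iii) show the unique cycle of $G_M$ is a $k$-cycle — this is the heart of the argument: the cross-cuts are the only curves that separate a region from a region ``on the other side'' in a way that can close a loop, and going around the annulus once crosses all $k$ of them, producing a $k$-cycle, while every other edge (from a half-circle) is a bridge because removing a half-circle merges two regions without otherwise changing connectivity; (iv) count edges inside versus outside the cycle, matching half-circles to edges; (v) construct the inverse map, recovering $M$ from an abstract such planar graph by reversing the construction (the planar embedding tells us how to nest the bridge-edges off the $k$-cycle into half-circles, and the cyclic order of the $k$-cycle dictates the cross-cut placement); (vi) verify the two maps are mutually inverse and respect the equivalence relations (isotopy/rotation on the matching side, graph isomorphism respecting the embedding on the graph side — in particular cyclic rotation of the $k$-cycle corresponds to rotating the annular matching).

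I expect step (iii) — pinning down that the unique cycle has length exactly $k$ — to be the main obstacle, together with getting the planar-embedding bookkeeping in (iv) and (v) exactly right so that ``inside the cycle'' and ``outside the cycle'' are well-defined and correspond cleanly to internal versus external half-circles. The subtlety is that in the annulus the notions of ``inside'' and ``outside'' a cycle are genuinely distinct (the hole and the unbounded region are separated by the ring of cross-cut-adjacent regions), so I must argue that the planar graph $G_M$, drawn in the plane with the $k$-cycle bounding a disk, inherits a consistent convention: edges from internal half-circles land inside that disk, edges from external half-circles land outside it. Once the embedding convention is fixed, the forest structure of nested half-circles on each boundary arc makes items (1)–(3) and the inverse construction essentially routine, paralleling the classical Catalan bijection within each sector. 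A clean way to organize this is to first handle the sub-structure in a single sector (between two consecutive cross-cuts), where the external half-circles there form an ordered forest exactly as in Figure \ref{fig: matchings to trees, half-plane}, and then glue the $k$ sectors around the cycle.
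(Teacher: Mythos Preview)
Your approach is exactly the paper's: the region-adjacency (dual) construction of Proposition~\ref{thm: graph interpretation, no crosscuts}, with the $k$ cross-cuts producing the unique $k$-cycle and the half-circles becoming the tree-edges hanging off it; the paper's own proof is essentially a two-sentence sketch of this picture, and you supply far more of the connective tissue than it does. One minor bookkeeping point you anticipated: under your convention the $n$ external half-circles land \emph{outside} the $k$-cycle and the $m$ internal ones land \emph{inside}, which swaps the roles of $n$ and $m$ relative to items (2)--(3) of the statement --- the paper's proof text has the very same reversal, so just fix the inside/outside orientation when you write it up.
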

\begin{proof}
The methodology required is extremely similar to what has already been presented in Proposition \ref{thm: graph interpretation, no crosscuts}.  Notice that a collection of $k$ cross-cuts produces a single $k$-cycle, as below:
\begin{center}
\begin{tikzpicture}
[scale=1.1,auto=left,every node/.style={circle,inner sep=0pt}]
	\draw (0,0) circle (.35cm);
	\draw (0,0) circle (1cm);
	\draw[thick] (0:1) to (0:.35);
	\draw[thick] (72:1) to (72:.35);
	\draw[thick] (144:1) to (144:.35);
	\draw[thick] (216:1) to (216:.35);
	\draw[thick] (288:1) to (288:.35);
	\node[fill=black,draw,inner sep=1.5pt,thick] (n1) at (36:.725) {};
	\node[fill=black,draw,inner sep=1.5pt,thick] (n2) at (108:.725) {};
	\node[fill=black,draw,inner sep=1.5pt,thick] (n3) at (180:.725) {};
	\node[fill=black,draw,inner sep=1.5pt,thick] (n4) at (252:.725) {};
	\node[fill=black,draw,inner sep=1.5pt,thick] (n5) at (324:.725) {};
\end{tikzpicture}
\hspace{0.3in}
\scalebox{2.5}{\raisebox{10pt}{$\Leftrightarrow$}}
\hspace{0.3in}
\raisebox{6pt}{
\begin{tikzpicture}
	[scale=1.2,auto=left,every node/.style={circle,inner sep=0pt}]
	\node[fill=black,draw,inner sep=1.5pt,thick] (n1) at (36:.725) {};
	\node[fill=black,draw,inner sep=1.5pt,thick] (n2) at (108:.725) {};
	\node[fill=black,draw,inner sep=1.5pt,thick] (n3) at (180:.725) {};
	\node[fill=black,draw,inner sep=1.5pt,thick] (n4) at (252:.725) {};
	\node[fill=black,draw,inner sep=1.5pt,thick] (n5) at (324:.725) {};
	\draw[thick] (n1) to (n2);
	\draw[thick] (n2) to (n3);
	\draw[thick] (n3) to (n4);
	\draw[thick] (n4) to (n5);
	\draw[thick] (n5) to (n1);
\end{tikzpicture}}
\end{center}
Any internal half-circles are then in bijection with edges inside the $k$-cycle, while external half-circles are in bijection with edges outside the $k$-cycle.
\end{proof}

\section{Enumeration of Annular Matchings}
\label{sec: enumeration}

We are now ready for the general enumerative results that form the core of this paper.  Subsection \ref{subsec: m=0 enumeration and necklaces} begins with an enumeration of maximal cross-cut annular matchings, and shows that those matchings are in bijection with certain types of binary combinatorial necklaces.  Subsection \ref{subsec: enumeration general} then collects all of our results to give an explicit formula for general $\vert Ann_k(2n+k,2m+k) \vert$, thus allowing for the direct calculation of $\vert Ann(a,b) \vert$ and $\vert Ann(N) \vert$.

\subsection{Enumeration of $Ann_k(2n+k,k)$ and Combinatorial Necklaces}
\label{subsec: m=0 enumeration and necklaces}

Burnside's Lemma has already been mentioned as the method used in \cite{GT} to calculate the number of circular non-crossing matchings.  Given that our annular non-crossing matchings possess a similar notion of rotational equivalence, it comes as little surprise that the lemma may also be applied to the enumeration of distinct matchings in the annulus.  Recall that Burnside's Lemma (also known as the Cauchy-Frobenius Lemma) applies to any situation where a finite group $G$ acts upon a set $A$.  It asserts that the number of orbits $\vert A/G \vert$ with respect to the action equals the average size of the sets $A^g = \lbrace a \in A \ \vert \ ga = a \rbrace$ when ranging over all $g \in G$: that $\vert A/G \vert = \frac{1}{\vert G \vert} \sum_{g \in G} \vert A^g \vert$.

So fix $n,k \geq 0$, and let $A$ equal the set of all non-crossing matchings in the annulus (prior to any notion of equivalence) with $2n+k$ endpoints located at $\frac{2\pi}{2n+k}$ radian intervals about the exterior boundary and precisely $k$ straight cross-cuts that meet both boundary components orthogonally.  We may define a (left) action of $G = \Z_{2n+k}$ on $A$ whereby $g \cdot a$ is counter-clockwise rotation of $a$ by $\frac{2\pi g}{2n+k}$ radians.  Then $G/A = Ann_k(2n+k,k)$, with distinct orbits in $G/A$ corresponding to matchings that are equivalent via rotation.  This sets up the following application of Burnside's Lemma:

\begin{theorem}
\label{thm: maximal cross-cut direct enumeration}
Let $n$ and $k$ be non-negative integers, not both zero.  Then:
\begin{center}
$\vert Ann_k(2n+k,k)\vert = \displaystyle{\frac{1}{2n+k} \sum_{d | (2n+k,n)} \kern-4pt \phi(d) \binom{(2n+k)/d}{n/d}}$
\end{center}
Where $\phi(d)$ is Euler's totient function and the sum runs over all common divisors $d$ of $2n+k$ and $n$.
\end{theorem}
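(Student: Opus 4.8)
The plan is to carry through the application of Burnside's Lemma already set up above, so that $\vert Ann_k(2n+k,k)\vert = \frac{1}{2n+k}\sum_{g\in\Z_{2n+k}}\vert A^g\vert$. Write $N=2n+k$ and group the group elements by the order of the cyclic subgroup they generate: for each divisor $\delta$ of $N$ there are exactly $\phi(\delta)$ elements $g$ with $\langle g\rangle$ equal to the unique subgroup $H_\delta\le\Z_N$ of order $\delta$ (generated by rotation through $N/\delta$ of the exterior positions), and $\vert A^g\vert$ for such $g$ depends only on $H_\delta$; hence $\sum_{g}\vert A^g\vert=\sum_{\delta\mid N}\phi(\delta)\,\vert A^{H_\delta}\vert$. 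The whole proof thus reduces to evaluating $\vert A^{H_\delta}\vert$, the number of matchings in $A$ invariant under rotation through $N/\delta$ positions, for every $\delta\mid N$.

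The first and most substantial step is the case $\delta=1$: I would show $\vert A\vert=\binom{2n+k}{n}$. The cleanest route I see is to collapse the interior boundary of the annulus to a single point $*$, so that a matching in $A$ becomes a collection of $k$ non-crossing ``spokes'' from $*$ to $k$ of the $N$ labelled exterior points together with $n$ non-crossing chords among the remaining $2n$ exterior points, all mutually non-crossing. Reading the $N$ exterior points in cyclic order and recording, at each point, whether or not it is the ``opening'' endpoint of the chord incident to it (with respect to the nesting of the chords, read from the labelled starting point) produces a length-$N$ binary word with $n$ opener letters; conversely every such word should be realized by a unique configuration, reconstructed by a cyclic matched-parenthesis rule. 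I expect this step to be the main obstacle: a cycle-lemma–flavored argument is needed to see that the $k$ unmatched letters are forced to be exactly the spoke feet and that the resulting configuration is non-crossing and unique, and one must track carefully the interaction between the cross-cuts, the external half-circles, and the ``hole'' — for instance, when $k=0$ the point $*$ may sit inside any of the nested half-circles, which is exactly what makes $\vert A\vert=\binom{2n}{n}$ rather than $C_n$.

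For $\delta>1$ I would argue as follows. A nontrivial rotation of the annulus has no fixed point and permutes the exterior positions freely; an arc (cross-cut or external half-circle) fixed setwise by it would have to be carried to itself by a fixed-point-free self-homeomorphism of an interval, which is impossible, since it can neither fix the two endpoints (the action on boundary points is free) nor merely swap them (a cross-cut has one endpoint on each boundary circle, and a fixed-point-free involution of an arc does not exist). Hence $H_\delta$ acts freely on the $k$ cross-cuts and on the $n$ external half-circles, forcing $\delta\mid k$ and $\delta\mid n$; so $\vert A^{H_\delta}\vert=0$ unless $\delta\mid\gcd(n,k)=\gcd(2n+k,n)$. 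When $\delta\mid\gcd(n,k)$, quotienting the annulus by the free $\Z_\delta$-action — equivalently pulling back along the $\delta$-fold cyclic self-cover of the annulus — gives a bijection between the $H_\delta$-invariant matchings in $A$ and all matchings in the analogous set for the parameters $(n/\delta,k/\delta)$, under which the exterior points, cross-cuts and half-circles upstairs are precisely the preimages under the covering map of those downstairs. Applying the $\delta=1$ count to the quotient then yields $\vert A^{H_\delta}\vert=\binom{(2n+k)/\delta}{n/\delta}$.

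Finally I would assemble the pieces: since the nonvanishing terms are exactly those with $\delta\mid\gcd(2n+k,n)$, Burnside's Lemma gives $\vert Ann_k(2n+k,k)\vert=\frac{1}{2n+k}\sum_{\delta\mid\gcd(2n+k,n)}\phi(\delta)\binom{(2n+k)/\delta}{n/\delta}$, which is the claimed formula. The hypothesis that $n$ and $k$ are not both zero is exactly what is needed for the leading factor $1/(2n+k)$ to make sense.
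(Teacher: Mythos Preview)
Your proposal is correct and follows essentially the same route as the paper: both apply Burnside's Lemma to the $\Z_{2n+k}$-action on matchings with fixed exterior endpoints, and both identify $\vert A^g\vert$ for $g$ of order $d$ with the number of ways to choose the $n/d$ ``opening'' endpoints among $(2n+k)/d$ positions, via the same recursive pairing / cyclic matched-parenthesis rule. The only difference is organizational---you establish $\vert A\vert=\binom{2n+k}{n}$ first and then pass to the $\delta$-fold cyclic quotient for the invariant matchings, whereas the paper works directly on a single fundamental sector---but the substance is identical.
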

\begin{proof}
Using the aforementioned group action, by Burnside's Lemma we merely need to show that $ \sum_{g \in \Z_{2n+k}} \vert A^g \vert = \sum_{d | (2n+k,n)} \phi(d) \binom{(2n+k)/d}{n/d}$.  So take $g \in \Z_{2n+k}$, and assume that $g$ has order $d$ in $\Z_{2n+k}$  Notice that Lagrange's Theorem guarantees $d \mid (2n+k)$, although it may or may not be true that $d \mid n$.

The elements of $A^g$ are those matchings that can be radially divided into $d$ identical sub-matchings.  Each of these sub-arrangements features $(2n+k)/d$ endpoints on the outer boundary of the annulus, $k/d$ of which are the outer endpoints of cross-cuts and $n/d$ of which are left-endpoints of exterior half-circles.  Now if $d \nmid n$, the left-endpoints cannot be sub-divided in this way and we may conclude that $\vert A^g \vert = 0$.  However, $d \mid n$ and $d \mid (2n+k,n)$ together guarantee that $k \mid n$, making $\vert A^g \vert \neq 0$ a possibility.  If $d \mid n$, notice that every possible sub-arrangement may be uniquely identified by specifying which of the $(2n+k)/d$ endpoints correspond to left (clockwise) endpoints of exterior half-circles.  This bijection, which closely resembles the upcoming construction in the proof of Theorem \ref{thm: annular matchings = necklaces}, involves recursively connecting each specified endpoint to the nearest available non-specified endpoint on its right and then associating the $k$ unused endpoints with cross-cuts.  See Figure \ref{fig: sub-matching example} for an example.  It follows that $\vert A^g \vert = \binom{(2n+k)/d}{n/d}$ whenever $\vert g \vert = d$ and $d \mid n$.

If $q \mid N$, basic number theory ensures that there are precisely $\phi(q)$ elements $i \in \Z_N$ with greatest common divisor $(i,N) = N/q$.  As the order of any element in $\Z_N$ is $N/(i,N)$, there exist precisely $\phi(q)$ elements $i \in \Z_N$ with order $\vert i \vert = q$.  Letting $N = 2n+k$, this ensures that there are precisely $\phi(d)$ elements $g \in \Z_{2n+k}$ such that $\vert A^g \vert = \binom{(2n+k)/d}{n/d}$, thus deriving the summation of the theorem.
\end{proof}

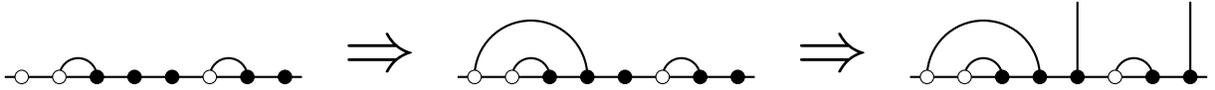
\begin{figure}[h!]
\centering
\begin{tikzpicture}
[scale=.5,auto=left,every node/.style={circle,inner sep=0pt}]
	\node[inner sep=0pt] (left) at (.5,0) {};
	\node[inner sep=0pt] (right) at (8.5,0) {};
	\draw[thick] (left) to (right);
	\draw[thick] (3,0) arc (0:180:.5);
	\draw[thick] (7,0) arc (0:180:.5);
	\node[fill=white,draw,inner sep=1.8pt] (1) at (1,0) {};
	\node[fill=white,draw,inner sep=1.8pt] (2) at (2,0) {};
	\node[fill=black,draw,inner sep=1.8pt] (3) at (3,0) {};
	\node[fill=black,draw,inner sep=1.8pt] (4) at (4,0) {};
	\node[fill=black,draw,inner sep=1.8pt] (5) at (5,0) {};
	\node[fill=white,draw,inner sep=1.8pt] (6) at (6,0) {};
	\node[fill=black,draw,inner sep=1.8pt] (7) at (7,0) {};
	\node[fill=black,draw,inner sep=1.8pt] (8) at (8,0) {};
\end{tikzpicture}
\hspace{0.1in}
\scalebox{2.5}{\raisebox{2pt}{$\Rightarrow$}}
\hspace{0.1in}
\begin{tikzpicture}
[scale=.5,auto=left,every node/.style={circle,inner sep=0pt}]
	\node[inner sep=0pt] (left) at (.5,0) {};
	\node[inner sep=0pt] (right) at (8.5,0) {};
	\draw[thick] (left) to (right);
	\draw[thick] (4,0) arc (0:180:1.5);
	\draw[thick] (3,0) arc (0:180:.5);
	\draw[thick] (7,0) arc (0:180:.5);
	\node[draw,fill=white,inner sep=1.8pt] (1) at (1,0) {};
	\node[fill=white,draw,inner sep=1.8pt] (2) at (2,0) {};
	\node[fill=black,draw,inner sep=1.8pt] (3) at (3,0) {};
	\node[fill=black,draw,inner sep=1.8pt] (4) at (4,0) {};
	\node[fill=black,draw,inner sep=1.8pt] (5) at (5,0) {};
	\node[fill=white,draw,inner sep=1.8pt] (6) at (6,0) {};
	\node[fill=black,draw,inner sep=1.8pt] (7) at (7,0) {};
	\node[fill=black,draw,inner sep=1.8pt] (8) at (8,0) {};
\end{tikzpicture}
\hspace{0.1in}
\scalebox{2.5}{\raisebox{2pt}{$\Rightarrow$}}
\hspace{0.1in}
\begin{tikzpicture}
[scale=.5,auto=left,every node/.style={circle,inner sep=0pt}]
	\node[inner sep=0pt] (left) at (.5,0) {};
	\node[inner sep=0pt] (right) at (8.5,0) {};
	\draw[thick] (left) to (right);
	\draw[thick] (4,0) arc (0:180:1.5);
	\draw[thick] (3,0) arc (0:180:.5);
	\draw[thick] (7,0) arc (0:180:.5);
	\draw[thick] (5,0) to (5,2);
	\draw[thick] (8,0) to (8,2);
	\node[fill=white,draw,inner sep=1.8pt] (1) at (1,0) {};
	\node[fill=white,draw,inner sep=1.8pt] (2) at (2,0) {};
	\node[fill=black,draw,inner sep=1.8pt] (3) at (3,0) {};
	\node[fill=black,draw,inner sep=1.8pt] (4) at (4,0) {};
	\node[fill=black,draw,inner sep=1.8pt] (5) at (5,0) {};
	\node[fill=white,draw,inner sep=1.8pt] (6) at (6,0) {};
	\node[fill=black,draw,inner sep=1.8pt] (7) at (7,0) {};
	\node[fill=black,draw,inner sep=1.8pt] (8) at (8,0) {};
\end{tikzpicture}

\caption{For every choice of $n$ left-endpoints (white circles) there is a unique annular sub-matching with $n$ half-circles and $k$ cross-cuts.  Here the relevant piece of the outer boundary is drawn as the real line.}
\label{fig: sub-matching example}
\end{figure}

Table \ref{tab: Annk(2n+k,k)} of Appendix \ref{sec: appendix tables} exhibits values of $\vert Ann_k(2n+k,k) \vert$ for $0 \leq n,k \leq 10$, all calculated in Maple via the equation of Theorem \ref{thm: maximal cross-cut direct enumeration}.  An examination of that table places $\vert Ann_k(2n+k,k) \vert$ into direct correspondence with the $T(n+k,n)$ entry of OEIS sequence A241926 \cite{OEIS}.  Using Proposition \ref{thm: reflection equivalence} when necessary to ensure that $2n+k > n$, $\vert Ann_k(2n+k,k) \vert$ may also be identified with the ``circular binomial coefficient" $T(2n+k,n)$ of OEIS sequence A047996 \cite{OEIS}.  Both of those OEIS sequence reveal a bijection between the $Ann_k(2n+k,k)$ and binary combinatorial necklaces, and in fact a summation equivalent to the one of Theorem \ref{thm: maximal cross-cut direct enumeration} has already been shown to equal the number of binary combinatorial necklaces of certain types \cite{Elash}.  Yet before investigating how these results relate to annular non-crossing matchings, we observe that the formula of Theorem \ref{thm: maximal cross-cut direct enumeration} may be significantly simplified when $k$ is prime: 

\begin{corollary}
\label{thm: maximal cross-cut direction enumeration, prime k}
Let $p$ be a prime integer and let $n$ be any non-negative integer.  Then:
\begin{center}
$\vert Ann_p(2n+p,p) \vert = \displaystyle{\frac{1}{2n+p} \binom{2n+p}{n} + \frac{p-1}{p} \ C_{n/p}}$
\end{center}
Where $C_{n/p}$ is the Catalan number, and is taken to be zero when $n/p$ is not an integer.
\end{corollary}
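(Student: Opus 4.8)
The plan is to specialize the formula of Theorem~\ref{thm: maximal cross-cut direct enumeration} to the case $k = p$ prime, analyzing the common divisors of $2n+p$ and $n$ that can occur. First I would observe that any common divisor $d \mid (2n+p, n)$ must divide $(2n+p) - 2n = p$, so $d \in \{1, p\}$. The $d = 1$ term of the sum always contributes $\phi(1)\binom{2n+p}{n} = \binom{2n+p}{n}$, giving the first summand $\frac{1}{2n+p}\binom{2n+p}{n}$ after dividing by $2n+p$. The $d = p$ term appears if and only if $p \mid n$ (note $p \mid 2n+p$ is automatic), in which case it contributes $\phi(p)\binom{(2n+p)/p}{n/p} = (p-1)\binom{2(n/p)+1}{n/p}$.

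The key remaining step is to simplify $\frac{p-1}{2n+p}\binom{(2n+p)/p}{n/p}$ into the claimed $\frac{p-1}{p}C_{n/p}$ when $p \mid n$. Writing $j = n/p$, we have $(2n+p)/p = 2j+1$, so this term is $\frac{p-1}{(2j+1)p}\binom{2j+1}{j}$. I would use the standard identity $\frac{1}{2j+1}\binom{2j+1}{j} = \frac{1}{j+1}\binom{2j}{j} = C_j$: indeed $\binom{2j+1}{j} = \binom{2j+1}{j+1} = \frac{2j+1}{j+1}\binom{2j}{j}$, so dividing by $2j+1$ gives exactly $C_j$. Hence the $d = p$ contribution is $\frac{p-1}{p}C_{n/p}$, and adopting the convention that $C_{n/p}$ is zero when $p \nmid n$ lets us write both cases uniformly as the displayed formula.

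A small bookkeeping point worth handling carefully is the degenerate case $n = 0$: here $(2n+p, n) = (p, 0) = p$, the only common divisor is $d = p$ (since by convention $p \mid 0$), and the formula should read $\frac{1}{p}\binom{p}{0} + \frac{p-1}{p}C_0 = \frac{1}{p} + \frac{p-1}{p} = 1 = |Ann_p(p,p)|$, which is correct since a single collection of $p$ cross-cuts is unique up to rotation. In the main argument I would simply treat $n = 0$ as subsumed by the $p \mid n$ branch, with $C_0 = 1$. I do not anticipate any genuine obstacle here; the only thing to be vigilant about is keeping the ``which divisors occur'' case analysis and the Catalan identity aligned with the conventions established in Theorem~\ref{thm: maximal cross-cut direct enumeration}, so that the $d=1$ and $d=p$ terms are not double-counted or dropped when $p \nmid n$.
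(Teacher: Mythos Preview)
Your approach is essentially identical to the paper's: both specialize Theorem~\ref{thm: maximal cross-cut direct enumeration} by observing that any common divisor of $2n+p$ and $n$ must divide $p$, then simplify the $d=p$ term (you supply the explicit identity $\tfrac{1}{2j+1}\binom{2j+1}{j}=C_j$, which the paper leaves as ``a simplification''). One small slip in your $n=0$ aside: the set of common divisors of $p$ and $0$ is $\{1,p\}$, not $\{p\}$ alone, so both terms of the sum are present there as well---but your main argument already handles this case correctly, so the aside is unnecessary.
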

\begin{proof}
It is a straightforward exercise to calculate that the greatest common divisor of $2n+p$ and $n$ is $(2n+p,n) = 1$ when $(p,n)=1$, as well as that $(2n+p,n) = p$ when $(p,n) = p$.  Theorem \ref{thm: maximal cross-cut direct enumeration} then gives:
\begin{itemize}
\item $\vert Ann_p(2n+p,n) \vert = \frac{1}{2n+p} \binom{2n+p}{n}$ \ \ when $p \nmid n$
\item $\vert Ann_p(2n+p,n) \vert = \frac{1}{2n+p} \binom{2n+p}{n} \ + \ \frac{1}{2n+p}(p-1) \binom{(2n+p)/p}{n/p}$ \ \ when $p \vert n$
\end{itemize}
A simplification of the final term in the second case then yields the desired formula.
\end{proof}

Notice that the more straightforward formula for $\vert Ann_k(2n+k,n) \vert$ in Corollary \ref{thm: maximal cross-cut direction enumeration, prime k} simplifies to the sequences A007595 \cite{OEIS} when $p=2$ and A003441 \cite{OEIS} when $p=3$.

Driven by sequences A241926 and A047996, we now work to develop an explicit bijection between the $\vert Ann_k(2n+k,k) \vert$ and binary combinatorial necklaces, providing a new combinatorial identity that supplements the one in \cite{Elash}.  A $k$-ary combinatorial necklace is a circular arrangement of ``beads" of up to $k$ distinguishable varieties (typically referred to as ``colors"), such that rotations of the beads around the circle are considered equivalent.  Necklaces that are related only via (orientation-reversing) reflection are not considered equivalent.  The number of distinct $k$-ary combinatorial necklace with precisely $n$ total beads is denoted $N_k(n)$.  Combinatorial necklaces are well-studied in the literature, and different classes of combinatorial necklaces are the focus of many integer sequences \cite{OEIS}.

In this paper we deal only with binary ($2$-ary) combinatorial necklaces, whose colors we refer to as ``black" and ``white".  Our results require increased specificity in that we need to designate the number of beads of each color, so we denote the number of distinct binary necklaces with precisely $n_1$ black beads and precisely $n_2$ white beads by $N_2(n_1,n_2)$.  Pause to note that some places in the literature refer to such combinatorial necklaces as ``binary necklaces of weight $n_1$".

\begin{theorem}
\label{thm: annular matchings = necklaces}
Let $n$ and $k$ be non-negative integers.  Then $\vert Ann_k(2n+k,k) \vert = N_2(n+k,n)$.
\end{theorem}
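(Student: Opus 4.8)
The plan is to construct an explicit bijection between $Ann_k(2n+k,k)$ and binary combinatorial necklaces with $n+k$ black beads and $n$ white beads, working through the "straightened" representatives in which all $k$ cross-cuts appear as radial chords orthogonal to both boundary circles. The starting observation is that such a representative is determined entirely by the cyclic configuration of its $2n+k$ outer endpoints: each endpoint is either the outer end of a cross-cut, or the clockwise (left) endpoint of an external half-circle, or the counter-clockwise (right) endpoint of an external half-circle. Conversely, once one records which of the $2n+k$ outer endpoints are "left endpoints," the half-circles are recovered by the same greedy matching procedure used in the proof of Theorem~\ref{thm: maximal cross-cut direct enumeration} (repeatedly join each left endpoint to the nearest available non-left endpoint to its right, cyclically, then declare the $k$ leftover endpoints to be cross-cut ends). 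This must actually produce a valid non-crossing matching with exactly $k$ cross-cuts: there are $n$ left endpoints, $n+k$ non-left endpoints, the greedy pairing nests rather than crosses, and it exhausts all $n$ half-circles leaving precisely $k$ cross-cut endpoints. I would cite Figure~\ref{fig: sub-matching example} here, since that figure already illustrates exactly this reconstruction.

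Next I would encode the data of "which outer endpoints are left endpoints" as a binary word of length $2n+k$, say a black bead at each non-left endpoint and a white bead at each left endpoint; this has $n$ white beads and $n+k$ black beads. Rotational equivalence of annular matchings corresponds precisely to cyclic rotation of this word — this is exactly the $\Z_{2n+k}$-action from the setup of Theorem~\ref{thm: maximal cross-cut direct enumeration}, only now phrased on bead words rather than on raw matchings — so passing to orbits on both sides gives a well-defined map $Ann_k(2n+k,k) \to N_2(n+k,n)$. The two things to verify are that the map is well-defined (a rotation of a matching induces a rotation of its bead word, immediate from the construction) and that it is a bijection: injectivity because the bead word determines the straightened matching up to rotation by the reconstruction above, and surjectivity because every binary word with $n$ white and $n+k$ black beads arises, the greedy reconstruction succeeding for any such word.

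One technical point deserves care: the greedy reconstruction is only obviously "rotation-equivariant" if it does not depend on a choice of starting point. I would handle this by noting that the nesting structure produced by "join each left endpoint to the nearest available right-type endpoint clockwise" is intrinsic to the cyclic word — it is the standard bijection between Dyck-type cyclic words and non-crossing partial matchings — and hence commutes with rotation. The degenerate cases ($n=0$, giving a single cross-cut-only matching and the single necklace with $k$ black beads; or $k=0$, recovering the known identification of $Ann_0(2n,0)$ with A047996) should be checked briefly to confirm the boundary behavior, and the $n=k=0$ case is the empty matching matched with the empty necklace. I expect the main obstacle to be writing the equivariance/well-definedness argument cleanly rather than any real mathematical difficulty: once the bijection is set up on straightened representatives and shown to intertwine the two $\Z_{2n+k}$-actions, the orbit count on both sides agrees and the theorem follows — and as a consistency check it recovers the formula of Theorem~\ref{thm: maximal cross-cut direct enumeration}, since that formula is exactly the Burnside count $\frac{1}{2n+k}\sum_{d\mid(2n+k,n)}\phi(d)\binom{(2n+k)/d}{n/d}$ for cyclic binary words.
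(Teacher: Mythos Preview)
Your proposal is correct and follows essentially the same route as the paper: both color the $2n+k$ outer endpoints white at left-endpoints of half-circles and black otherwise, and both invert via the greedy ``match each white bead to the nearest available black bead'' reconstruction, observing that rotational equivalence is preserved. The only cosmetic difference is that the paper packages the argument as a pair of injections $\phi_1,\phi_2$ in opposite directions rather than as a single equivariant bijection, and is less explicit than you are about the rotation-equivariance of the greedy step.
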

\begin{proof}
Denote the set of all combinatorial necklaces with $n_1$ black beads and $n_2$ white beads by $S$.  We define functions $\phi_1: Ann_k(2n+k,k) \rightarrow S$ and $\phi_2: S \rightarrow Ann_k(2n+k,k)$ that are both injective.  For $\phi_1$ we follow the procedure exemplified below, placing white beads at the left-endpoints of exterior half-circles and black beads at right-endpoints of exterior half-circles as well as at the exterior endpoints of cross-cuts:
\begin{center}
\begin{tikzpicture}
[scale=1.1,auto=left,every node/.style={circle,inner sep=0pt}]
	\draw (0,0) circle (.35cm);
	\draw (0,0) circle (1cm);
	\draw[thick, bend left=70] (210:1) to (240:1);
	\draw[thick] (270:1) to (270:.35);
	\draw[thick] (180:1) to (180:.35);
	\draw[thick, bend right=70] (150:1) to (120:1);
	\draw[thick, bend right=20] (300:1) to (90:1);
	\draw[thick, bend right=20] (30:1) to (0:1);
	\draw[thick] (60:1) to (330:1);
\end{tikzpicture}
\hspace{0.2in}
\scalebox{2.5}{\raisebox{10pt}{$\Rightarrow$}}
\hspace{0.2in}
\begin{tikzpicture}
[scale=1.1,auto=left,every node/.style={circle,inner sep=0pt}]
	\draw (0,0) circle (.35cm);
	\draw (0,0) circle (1cm);
	\draw[thick, bend left=90] (210:1) to (240:1);
	\draw[thick] (270:1) to (270:.35);
	\draw[thick] (180:1) to (180:.35);
	\draw[thick, bend right=90] (150:1) to (120:1);
	\draw[thick, bend right=20] (300:1) to (90:1);
	\draw[thick, bend right=40] (30:1) to (0:1);
	\draw[thick] (60:1) to (330:1);
	\node[draw,fill=black,inner sep=2.5pt] (1c) at (60:1) {};
	\node[draw,fill=black,inner sep=2.5pt] (2c) at (30:1) {};
	\node[draw,fill=white,inner sep=2.5pt] (3c) at (0:1) {};
	\node[draw,fill=white,inner sep=2.5pt] (4c) at (330:1) {};
	\node[draw,fill=white,inner sep=2.5pt](5c) at (300:1) {};
	\node[draw,fill=black,inner sep=2.5pt](6c) at (270:1) {};
	\node[draw,fill=black,inner sep=2.5pt] (7c) at (240:1) {};
	\node[draw,fill=white,inner sep=2.5pt] (8c) at (210:1) {};
	\node[draw,fill=black,inner sep=2.5pt] (9c) at (180:1) {};
	\node[draw,fill=black,inner sep=2.5pt] (10c) at (150:1) {};
	\node[draw,fill=white,inner sep=2.5pt] (11c) at (120:1) {};
	\node[draw,fill=black,inner sep=2.5pt] (12c) at (90:1) {};
\end{tikzpicture}
\hspace{0.2in}
\scalebox{2.5}{\raisebox{10pt}{$\Rightarrow$}}
\hspace{0.2in}
\begin{tikzpicture}
[scale=1.1,auto=left,every node/.style={circle,inner sep=0pt}]
	\draw (0,0) circle (1cm);
	\node[draw,fill=black,inner sep=2.5pt] (1c) at (60:1) {};
	\node[draw,fill=black,inner sep=2.5pt] (2c) at (30:1) {};
	\node[draw,fill=white,inner sep=2.5pt] (3c) at (0:1) {};
	\node[draw,fill=white,inner sep=2.5pt] (4c) at (330:1) {};
	\node[draw,fill=white,inner sep=2.5pt](5c) at (300:1) {};
	\node[draw,fill=black,inner sep=2.5pt](6c) at (270:1) {};
	\node[draw,fill=black,inner sep=2.5pt] (7c) at (240:1) {};
	\node[draw,fill=white,inner sep=2.5pt] (8c) at (210:1) {};
	\node[draw,fill=black,inner sep=2.5pt] (9c) at (180:1) {};
	\node[draw,fill=black,inner sep=2.5pt] (10c) at (150:1) {};
	\node[draw,fill=white,inner sep=2.5pt] (11c) at (120:1) {};
	\node[draw,fill=black,inner sep=2.5pt] (12c) at (90:1) {};
\end{tikzpicture}
\end{center}

For $\phi_2$ we begin at any point along the combinatorial necklace and proceed counter-clockwise.  Every time we encounter a white bead, we add a half-circle connecting that bead to the first black bead (in the counter-clockwise direction) that is not already the right-endpoint of a half-circle.  Repeat this procedure, traversing the necklace multiple times if necessary, until every white bead is the left-endpoint of a half-circle.  Then add the inner boundary of the annulus and, for every black bead that is not already the right-endpoint of a half-circle, add a cross-cut whose exterior endpoint is that black bead.

\begin{center}
\begin{tikzpicture}
[scale=1.1,auto=left,every node/.style={circle,inner sep=0pt}]
	\draw (0,0) circle (1cm);
	\node[draw,fill=black,inner sep=2.5pt] (1c) at (60:1) {};
	\node[draw,fill=black,inner sep=2.5pt] (2c) at (30:1) {};
	\node[draw,fill=white,inner sep=2.5pt] (3c) at (0:1) {};
	\node[draw,fill=white,inner sep=2.5pt] (4c) at (330:1) {};
	\node[draw,fill=white,inner sep=2.5pt](5c) at (300:1) {};
	\node[draw,fill=black,inner sep=2.5pt](6c) at (270:1) {};
	\node[draw,fill=black,inner sep=2.5pt] (7c) at (240:1) {};
	\node[draw,fill=white,inner sep=2.5pt] (8c) at (210:1) {};
	\node[draw,fill=black,inner sep=2.5pt] (9c) at (180:1) {};
	\node[draw,fill=black,inner sep=2.5pt] (10c) at (150:1) {};
	\node[draw,fill=white,inner sep=2.5pt] (11c) at (120:1) {};
	\node[draw,fill=black,inner sep=2.5pt] (12c) at (90:1) {};
\end{tikzpicture}
\hspace{0.2in}
\scalebox{2.5}{\raisebox{10pt}{$\Rightarrow$}}
\hspace{0.2in}
\begin{tikzpicture}
[scale=1.1,auto=left,every node/.style={circle,inner sep=0pt}]
	\draw (0,0) circle (1cm);
	\draw[thick, bend left=90] (210:1) to (240:1);
	\draw[thick, bend right=90] (150:1) to (120:1);
	\draw[thick, bend right=40] (30:1) to (0:1);
	\node[draw,fill=black,inner sep=2.5pt] (1c) at (60:1) {};
	\node[draw,fill=black,inner sep=2.5pt] (2c) at (30:1) {};
	\node[draw,fill=white,inner sep=2.5pt] (3c) at (0:1) {};
	\node[draw,fill=white,inner sep=2.5pt] (4c) at (330:1) {};
	\node[draw,fill=white,inner sep=2.5pt](5c) at (300:1) {};
	\node[draw,fill=black,inner sep=2.5pt](6c) at (270:1) {};
	\node[draw,fill=black,inner sep=2.5pt] (7c) at (240:1) {};
	\node[draw,fill=white,inner sep=2.5pt] (8c) at (210:1) {};
	\node[draw,fill=black,inner sep=2.5pt] (9c) at (180:1) {};
	\node[draw,fill=black,inner sep=2.5pt] (10c) at (150:1) {};
	\node[draw,fill=white,inner sep=2.5pt] (11c) at (120:1) {};
	\node[draw,fill=black,inner sep=2.5pt] (12c) at (90:1) {};
\end{tikzpicture}
\hspace{0.2in}
\scalebox{2.5}{\raisebox{10pt}{$\Rightarrow$}}
\hspace{0.2in}
\begin{tikzpicture}
[scale=1.1,auto=left,every node/.style={circle,inner sep=0pt}]
	\draw (0,0) circle (1cm);
	\draw[thick, bend left=90] (210:1) to (240:1);
	\draw[thick, bend right=90] (150:1) to (120:1);
	\draw[thick, bend right=40] (30:1) to (0:1);
	\draw[thick] (60:1) to (330:1);
	\node[draw,fill=black,inner sep=2.5pt] (1c) at (60:1) {};
	\node[draw,fill=black,inner sep=2.5pt] (2c) at (30:1) {};
	\node[draw,fill=white,inner sep=2.5pt] (3c) at (0:1) {};
	\node[draw,fill=white,inner sep=2.5pt] (4c) at (330:1) {};
	\node[draw,fill=white,inner sep=2.5pt](5c) at (300:1) {};
	\node[draw,fill=black,inner sep=2.5pt](6c) at (270:1) {};
	\node[draw,fill=black,inner sep=2.5pt] (7c) at (240:1) {};
	\node[draw,fill=white,inner sep=2.5pt] (8c) at (210:1) {};
	\node[draw,fill=black,inner sep=2.5pt] (9c) at (180:1) {};
	\node[draw,fill=black,inner sep=2.5pt] (10c) at (150:1) {};
	\node[draw,fill=white,inner sep=2.5pt] (11c) at (120:1) {};
	\node[draw,fill=black,inner sep=2.5pt] (12c) at (90:1) {};
\end{tikzpicture}\\
\scalebox{2.5}{\raisebox{10pt}{$\Rightarrow$}}
\hspace{.2in}
\begin{tikzpicture}
[scale=1.1,auto=left,every node/.style={circle,inner sep=0pt}]
	\draw (0,0) circle (1cm);
	\draw[thick, bend left=90] (210:1) to (240:1);
	\draw[thick, bend right=90] (150:1) to (120:1);
	\draw[thick, bend right=20] (300:1) to (90:1);
	\draw[thick, bend right=40] (30:1) to (0:1);
	\draw[thick] (60:1) to (330:1);
	\node[draw,fill=black,inner sep=2.5pt] (1c) at (60:1) {};
	\node[draw,fill=black,inner sep=2.5pt] (2c) at (30:1) {};
	\node[draw,fill=white,inner sep=2.5pt] (3c) at (0:1) {};
	\node[draw,fill=white,inner sep=2.5pt] (4c) at (330:1) {};
	\node[draw,fill=white,inner sep=2.5pt](5c) at (300:1) {};
	\node[draw,fill=black,inner sep=2.5pt](6c) at (270:1) {};
	\node[draw,fill=black,inner sep=2.5pt] (7c) at (240:1) {};
	\node[draw,fill=white,inner sep=2.5pt] (8c) at (210:1) {};
	\node[draw,fill=black,inner sep=2.5pt] (9c) at (180:1) {};
	\node[draw,fill=black,inner sep=2.5pt] (10c) at (150:1) {};
	\node[draw,fill=white,inner sep=2.5pt] (11c) at (120:1) {};
	\node[draw,fill=black,inner sep=2.5pt] (12c) at (90:1) {};
\end{tikzpicture}
\hspace{.2in}
\scalebox{2.5}{\raisebox{10pt}{$\Rightarrow$}}
\hspace{.2in}
\begin{tikzpicture}
[scale=1.1,auto=left,every node/.style={circle,inner sep=0pt}]
	\draw (0,0) circle (.35cm);
	\draw (0,0) circle (1cm);
	\draw[thick, bend left=90] (210:1) to (240:1);
	\draw[thick] (270:1) to (270:.35);
	\draw[thick] (180:1) to (180:.35);
	\draw[thick, bend right=90] (150:1) to (120:1);
	\draw[thick, bend right=20] (300:1) to (90:1);
	\draw[thick, bend right=40] (30:1) to (0:1);
	\draw[thick] (60:1) to (330:1);
	\node[draw,fill=black,inner sep=2.5pt] (1c) at (60:1) {};
	\node[draw,fill=black,inner sep=2.5pt] (2c) at (30:1) {};
	\node[draw,fill=white,inner sep=2.5pt] (3c) at (0:1) {};
	\node[draw,fill=white,inner sep=2.5pt] (4c) at (330:1) {};
	\node[draw,fill=white,inner sep=2.5pt](5c) at (300:1) {};
	\node[draw,fill=black,inner sep=2.5pt](6c) at (270:1) {};
	\node[draw,fill=black,inner sep=2.5pt] (7c) at (240:1) {};
	\node[draw,fill=white,inner sep=2.5pt] (8c) at (210:1) {};
	\node[draw,fill=black,inner sep=2.5pt] (9c) at (180:1) {};
	\node[draw,fill=black,inner sep=2.5pt] (10c) at (150:1) {};
	\node[draw,fill=white,inner sep=2.5pt] (11c) at (120:1) {};
	\node[draw,fill=black,inner sep=2.5pt] (12c) at (90:1) {};
\end{tikzpicture}
\end{center}
Both $\phi_1$ and $\phi_2$ are clearly well-defined and injective, as the excess of $(n+k) - n = k$ black beads are in bijection with the $k$ necessary cross-cuts and the rotational notion of equivalence is identical for combinatorial necklaces and annular non-crossing matchings.  The result then follows.
\end{proof}

\subsection{Enumeration of $Ann_k(2n+k,2m+k)$, General Case}
\label{subsec: enumeration general}

We are finally ready for the enumeration of general $Ann_k(2n+k,2m+k)$ that do not correspond to the special $m=0$ case of Subsection \ref{subsec: m=0 enumeration and necklaces}.  There are actually two sub-cases here depending upon whether or not $k$ is nonzero:

\begin{theorem}
\label{thm: general enumeration}
Let $n$, $m$ , and $k$ be non-negative integers with $m > 0$.  Then:
\begin{enumerate}
\item $\vert Ann_k(2n+k,2m+k) \vert = \vert Ann_0(2n,0) \vert \cdot \vert Ann_0(2m,0) \vert$ \ \ if $k = 0$, and
\item $\vert Ann_k(2n+k,2m+k) \vert = \displaystyle{\frac{k}{(2n+k)(2m+k)} \sum_{d | (2n+k,n,m)} \kern-10pt \phi(d) \binom{(2n+k)/d}{n/d} \binom{(2m+k)/d}{m/d}}$ \ \ if $k > 0$
\end{enumerate}
Where $\phi(d)$ is Euler's totient function and summations run over all common divisors of the given integers.
\end{theorem}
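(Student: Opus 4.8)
The plan is to dispose of $k=0$ in one line and then treat $k\geq 1$ by Burnside's Lemma, applied to a cyclic group acting on the \emph{sectors} cut out by the cross-cuts rather than on the boundary endpoints. Statement (1) is precisely the $k=0$ instance of Proposition~\ref{thm: splitting into one-sided product}: since $Ann_0(2m+0,0)=Ann_0(2m,0)$, that proposition immediately yields $\vert Ann_0(2n,2m)\vert=\vert Ann_0(2n,0)\vert\cdot\vert Ann_0(2m,0)\vert$, which is the asserted equality.

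For statement (2), assume $k\geq 1$ and begin by setting up a combinatorial model. Isotope so that the $k$ cross-cuts appear as straight radial chords; they then divide the annulus into $k$ disk-shaped ``sectors'' arranged cyclically. Within a single sector, the $2a_i$ outer endpoints and $2b_i$ inner endpoints occupy two disjoint arcs of that sector's boundary, separated by the two bounding cross-cuts; consequently the external half-circles of the sector form a linear non-crossing matching of its outer endpoints ($C_{a_i}$ possibilities) while, completely independently, the internal half-circles form a linear non-crossing matching of its inner endpoints ($C_{b_i}$ possibilities) --- no external half-circle can enclose an inner endpoint. Conversely, any such sector data reassembles into an annular matching, and, because reflections are disallowed and no edge may pass through the central hole, the only equivalences among these configurations are cyclic rotations of the $k$ sectors. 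This identifies $Ann_k(2n+k,2m+k)$ with the set of $\Z_k$-orbits, under cyclic rotation, of sequences $(B_1,\dots,B_k)$ in which $B_i$ is a pair consisting of a linear non-crossing matching on $2a_i$ points and one on $2b_i$ points, subject to $\sum_i a_i=n$ and $\sum_i b_i=m$.

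The next step is to run Burnside's Lemma for this $\Z_k$-action. Let $X$ denote the set of such sequences and let $\sigma$ generate $\Z_k$. A sequence is fixed by $\sigma^{j}$, where $e=\gcd(j,k)$, exactly when it has period $e$, i.e.\ is the $(k/e)$-fold repeat of its first $e$ blocks; equating totals then forces those $e$ blocks to carry $ne/k$ external and $me/k$ internal half-circles, so the fixed set is empty unless $k/e$ divides both $n$ and $m$, and otherwise (since the outer and inner data are independent) has size $\big([x^{ne/k}]C(x)^{e}\big)\big([x^{me/k}]C(x)^{e}\big)$, where $C(x)=\sum_{j\geq 0}C_j x^{j}$ is the Catalan generating function. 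At this point I would invoke the standard identity $[x^{P}]C(x)^{e}=\frac{e}{2P+e}\binom{2P+e}{P}$ (a cycle-lemma count of $e$-tuples of Dyck paths). Setting $d=k/e$ and using $\frac{e}{2ne/k+e}=\frac{k}{2n+k}$, this shows that a fixed set is nonempty exactly when $d\mid n$ and $d\mid m$ (note $d\mid k$ automatically), with size $\frac{k}{2n+k}\binom{(2n+k)/d}{n/d}\cdot\frac{k}{2m+k}\binom{(2m+k)/d}{m/d}$.

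Finally, there are $\phi(k/e)=\phi(d)$ elements $j\in\Z_k$ with $\gcd(j,k)=e$, so Burnside's Lemma gives $\vert Ann_k(2n+k,2m+k)\vert=\frac{1}{k}\sum_{d}\phi(d)\,\frac{k^{2}}{(2n+k)(2m+k)}\binom{(2n+k)/d}{n/d}\binom{(2m+k)/d}{m/d}$, the sum running over all $d$ dividing each of $k$, $n$, and $m$; since $\gcd(2n+k,n,m)=\gcd(k,n,m)$ this index set is exactly the set of common divisors of $2n+k$, $n$, and $m$, and the leading factor $k^{2}/k=k$ produces the claimed expression. I expect the main obstacle to lie in the first step --- rigorously justifying the sector model, particularly that the external and internal half-circles inside a sector are truly independent (so a sector contributes the product $C_{a_i}C_{b_i}$ rather than some entangled count) and that cyclic rotation of sectors is the \emph{only} equivalence among such configurations. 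Once that is in place, the cycle-lemma identity and the Burnside bookkeeping are routine; as a consistency check, this same argument specializes at $m=0$ to reprove Theorem~\ref{thm: maximal cross-cut direct enumeration}.
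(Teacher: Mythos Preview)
Your argument is correct and takes a genuinely different route from the paper.  For $k\geq 1$ the paper acts with the large cyclic group $\Z_{(2n+k)(2m+k)}$ on the set of $k\binom{2n+k}{n}\binom{2m+k}{m}$ ``rigid'' matchings with endpoints at fixed angular positions (the factor $k$ recording the twist of the cross-cuts), and counts each $A^{g}$ by specifying which outer and inner endpoints are left-endpoints of half-circles, exactly as in Theorem~\ref{thm: maximal cross-cut direct enumeration}.  You instead cut along the cross-cuts, act with the much smaller group $\Z_{k}$ on the cyclic sequence of sectors, and evaluate each fixed-point count via the Catalan convolution identity $[x^{P}]C(x)^{e}=\frac{e}{2P+e}\binom{2P+e}{P}$.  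Your approach is more intrinsically geometric and explains transparently why the $k=0$ case must be handled separately (there are no sectors); it also makes the independence of the inner and outer data visible at the level of each sector.  The paper's approach, on the other hand, stays parallel to its proof of Theorem~\ref{thm: maximal cross-cut direct enumeration} and needs no external generating-function identity.  The point you flag as the main obstacle---that the sector decomposition is a bijection with $X/\Z_{k}$, in particular that outer and inner half-circles in a sector are independent and that cyclic rotation of sectors is the only residual equivalence---is indeed the only place requiring care, and the paper's proof has an analogous (and comparably informal) step when it asserts that its rotation action realizes exactly the isotopy equivalence.
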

\begin{proof}
Case \#1 follows directly from Proposition \ref{thm: splitting into one-sided product}.  For Case \#2 we look to apply Burnside's Lemma by defining an action of $\Z_{(2n+k)(2m+k)}$ on a set $A$ of relevant matchings.

So fix $n,m \geq 0$, $k >0$, and consider annular non-crossing matchings with precisely $n$ exterior half-circles, $m$ interior half-circles, and $k$ cross-cuts.  To form our set $A$, we require that the $2n+k$ exterior endpoints are located at $\frac{2\pi}{2n+k}$ radian intervals about the exterior boundary, and that the $2m+k$ interior endpoints are located at $\frac{2\pi}{2m+k}$ radian intervals about the interior boundary.  Unlike in the proof of Theorem \ref{thm: maximal cross-cut direct enumeration}, we do not require that the $k$ cross-cuts appear as straight lines that meet the boundaries at right angles, as that condition could require the re-spacing of endpoints on one of the boundary components.  Here we consider cross-cuts up to isotopy that fix their endpoints.  Absolutely no rotational isotopy of endpoints or rotation of either boundary component is allowed.  Notice that $A$ is composed of exactly $k \binom{2n+k}{n} \binom{2m+k}{m}$ matchings.  Here the binomial coefficients are derived from specifying which of the endpoints on the inner and outer boundary component correspond to the left-endpoints of half-circles (as in the proofs of Theorems \ref{thm: maximal cross-cut direct enumeration} and \ref{thm: annular matchings = necklaces}), while the additional $k$ term results from the ambiguity in matching up the remaining $k$ endpoints on each side to form $k$ cross-cuts.  Notice that specifying both endpoints of a single cross-cuts determines how all remaining cross-cuts are matched amongst the remaining $(k-1)$ endpoints on each side.

We then define a left action of $\Z_{(2n+k)(2m+k)}$ on $A$ where $g \cdot a$ is a counter-clockwise rotation of the entire matching by $\frac{2 \pi g}{(2n+k)(2m+k)}$ radians.  If $\vert g \vert = d$, the elements of $A^g$ are matchings that may be radially divided in $d$ identical sub-matchings along both the inner and outer boundaries, with analogous identifications of cross-cuts in each sub-matching.  Observe that we do not require that both endpoints of each cross-cut lie in the same sub-matching, merely that each sub-matching exhibits an identical pattern with regards to any cross-cuts involved.  For $\vert A^g \vert \neq 0$ it is immediate that we must have both $d \mid (2n+k)$ and $d \mid (2m+k)$.  To ensure that left-endpoints of half-circles are mapped to left-endpoints and that cross-cuts are mapped to cross-cuts, it is also required that $d \mid n$, $d \mid m$, and $d \mid k$.  It can easily be shown that $d \mid (2n+k,n,m)$ is necessary and sufficient to satisfy all of these conditions.\footnote{Other equivalent conditions on $d$ include $d \mid (2m+k,n,m)$ and $d \mid (k,n,m)$.}  Thus $\vert A^g \vert \neq 0$ if and only if $d \mid (2n+k,n,m)$.

So take $g \in \Z_{(2n+k)(2m+k)}$ with $\vert g \vert = d$ such that $d \mid (2n+k,n,m)$.  Via similar reasoning as in Theorem \ref{thm: maximal cross-cut direct enumeration}, there are $\binom{2n+k}{n}$ choices for the outer boundary of each sub-matching and $\binom{2m+k}{m}$ independent choices for the inner boundary of each sub-matching.  After the endpoints belonging to cross-cuts have been identified, there are also $k$ independent choices for how the cross-cuts match up across the annulus.  These $k$ choices correspond to similarly-symmetrical matchings whose blocks are identical apart from the fact that their cross-cuts uniformly ``twist around" the annulus by different amounts.  We may conclude that $\vert A^g \vert = k \binom{(2n+k)/d}{n/d} \binom{(2m+k)/d}{m/d}$ if $d \mid (2n+k,n,m)$.

Similarly to our argument from the proof of Theorem \ref{thm: maximal cross-cut direct enumeration}, if $q \vert (2n+k)(2m+k)$ one may show that there are precisely $\phi(q)$ elements $i \in \Z_{(2n+k)(2m+k)}$ with order $\vert i \vert = q$.  It follows that there are precisely $\phi(d)$ elements $g \in \Z_{(2m+k)(2n+k)}$ with $\vert A^g \vert = k \binom{(2n+k)/d}{n/d} \binom{(2m+k)/d}{m/d}$, yielding the equation of Case \#2 via an application of Burnside's Lemma.
\end{proof}

Theorems \ref{thm: maximal cross-cut direct enumeration} and \ref{thm: general enumeration} combine to provide a closed formula for every $\vert Ann_k(2n+k,2m+k) \vert$ in which $n,m,k$ are not all zero.  As we clearly have $\vert Ann_0(0,0) \vert = 1$, this accounts for all possibilities and allows us to directly enumerate $Ann(n,m) = \bigcup_k Ann_k(n,m)$ for all $n,m \geq 0$.

Table \ref{tab: Ann(n,m)} of Appendix \ref{sec: appendix tables} presents values of $\vert Ann(n,m) \vert$ for all $0 \leq n,m \leq 12$, calculated in Maple using the equations of Theorems \ref{thm: maximal cross-cut direct enumeration} and \ref{thm: general enumeration}.  In Proposition \ref{thm: graph interpretation, no crosscuts}, we have already established that the $n=0$ row (or $m=0$ column) of Table \ref{tab: Ann(n,m)} corresponds to A003239 \cite{OEIS}.  Also noted in Section \ref{sec: basic annular results} what the fact that the $n=1$ row (or $m=1$ column) of Table \ref{tab: Ann(n,m)} corresponds to the Catalan numbers.  However, no other rows, diagonals, or triangles of numbers from Table \ref{tab: Ann(n,m)} appear to correspond to any known sequences on OEIS.  This yields an entire family of new integer sequences with an explicit geometric interpretation.  Of particular interest are the rows for $n>2$, representing new generalizations of the Catalan numbers that appear as later terms in a sequence of sequences beginning with A003239 and the Catalan numbers.

Table \ref{tab: Ann(2n)} of Appendix \ref{sec: appendix tables} shows values of $Ann(2n)$ for small values of $n$.  These values are most easily derived by summing anti-diagonals from Table \ref{tab: Ann(n,m)}.  The sequence of Table \ref{tab: Ann(2n)} also fails to appear as a known integer sequence on OEIS.

\section{Acknowledgements}

Both authors would like to thank the Department of Mathematics \& Statistics at Valparaiso University, whose MATH 492 Research in Mathematics course provided the framework under which this research took place.  The first author would also like to thank Dr. Lara Pudwell, who offered helpful advice in the latter stages of the project.

\bibliographystyle{amsplain}
\bibliography{biblio}

\newpage

\appendix
\section{Tables of Values}
\label{sec: appendix tables}

All tables in this appendix were generated in Maple 18 using the equations of Theorems \ref{thm: maximal cross-cut direct enumeration} and \ref{thm: general enumeration}.  Coding is available upon request from Paul Drube \tt (paul.drube@valpo.edu)

\begin{table}[h!]
\centering
\caption{Enumeration of Maximal Cross-Cut Annular Matchings $\vert Ann_k(2n+k,k) \vert$.}
\label{tab: Annk(2n+k,k)}
\begin{small}
\begin{tabular}{c|c c c c c c c c c c c }
& k=0 & k=1 & k=2 & k=3 & k=4 & k=5 & k=6 & k=7 & k=8 & k=9 & k=10 \\ \hline
n=0 & 1 & 1 & 1 & 1 & 1 & 1 & 1 & 1 & 1 & 1 & 1 \\
n=1 & 1 & 1 & 1 & 1 & 1 & 1 & 1 & 1 & 1 & 1 & 1 \\
n=2 & 2 & 2 & 3 & 3 & 4 & 4 & 5 & 5 & 6 & 6 & 7 \\
n=3 & 4 & 5 & 7 & 10 & 12 & 15 & 19 & 22 & 26 & 31 & 35 \\
n=4 & 10 & 14 & 22 & 30 & 43 & 55 & 73 & 91 & 116 & 140 & 172 \\
n=5 & 26 & 42 & 66 & 99 & 143 & 201 & 273 & 364 & 476 & 612 & 776 \\
n=6 & 80 & 132 & 217 & 335 & 504 & 728 & 1038 & 1428 & 1944 & 2586 & 3399 \\
n=7 & 246 & 429 & 715 & 1144 & 1768 & 2652 & 3876 & 5538 & 7752 & 10659 & 14421 \\
n=8 & 810 & 1430 & 2438 & 3978 & 6310 & 9690 & 14550 & 21318 & 30667 & 43263 & 60115 \\
n=9 & 2704 & 4862 & 8398 & 14000 & 22610 & 35530 & 54484 & 81719 & 120175 & 173593 & 246675 \\
n=10 & 9252 & 16796 & 29414 & 49742 & 81752 & 130752 & 204347 & 312455 & 468754 & 690690 & 1001603
\end{tabular}
\end{small}
\end{table}

\begin{table}[h!]
\centering
\caption{Enumeration of Annular Non-Crossing Matchings $\vert Ann(n,m) \vert$.  Entries where $\vert Ann(n,m) \vert =0$ have been left blank, and the main diagonal $\vert Ann(n,n) \vert$ has been bolded to emphasize $n \leftrightarrow m$ symmetry.}
\label{tab: Ann(n,m)}
\begin{small}
\begin{tabular}{c|c c c c c c c c c c c c c}
& m=0 & m=1 & m=2 & m=3 & m=4 & m=5 & m=6 & m=7 & m=8 & m=9 & m=10 & m=11 & m=12 \\ \hline
n=0 & \textbf{1} & & 1 & & 2 & & 4 & & 10 & & 26 & & 80 \\
n=1 & & \textbf{1} & & 1 & & 2 & & 5 & & 14 & & 42 & \\
n=2 & 1 & & \textbf{2} & & 3 & & 7 & & 17 & & 48 & & 146 \\
n=3 & & 1 & & \textbf{2} & & 3 & & 8 & & 24 & & 72 & \\
n=4 & 2 & & 3 & & \textbf{7} & & 14 & & 38 & & 106 & & 335 \\
n=5 & & 2 & & 3 & & \textbf{8} & & 20 & & 60 & & 189 & \\
n=6 & 4 & & 7 & & 14 & & \textbf{34} & & 90 & & 263 & & 834 \\
n=7 & & 5 & & 8 & & 20 & & \textbf{58} & & 175 & & 560 & \\
n=8 & 10 & & 17 & & 38 & & 90 & & \textbf{255}  & & 750 & & 2420 \\
n=9 & & 14 & & 24 & & 60 & & 175 & & \textbf{546} & & 1764 & \\
n=10 & 26 & & 48 & & 106 & & 263 & & 750 & & \textbf{2268} & & 7372 \\
n=11 & & 42 & & 72 & & 189 & & 560 & & 1764 & & \textbf{5774} & \\
n=12 & 80 & & 146 & & 335 & & 834 & & 2420 & & 7372 & & \textbf{24198}
\end{tabular}
\end{small}
\end{table}

\begin{table}[h!]
\centering
\caption{Enumeration of Annular Non-Crossing Matchings $\vert Ann(2n) \vert$.}
\label{tab: Ann(2n)}
\begin{tabular}{c|c c c c c c c c c c c c c c}
n & 0 & 1 & 2 & 3 & 4 & 5 & 6 & 7 & 8 & 9 & 10 & 11 & 12 & 13\\ \hline
Ann(2n) & 1 & 3 & 8 & 20 & 57 & 166 & 538 & 1762 & 6045 & 21040 & 74628 & 267598 & 970134 & 3544416
\end{tabular}
\end{table}

\end{document}